\newtheorem{theorem}{Theorem}[section]
\newtheorem{proposition}[theorem]{Proposition}
\newtheorem{lemma}[theorem]{Lemma}
\newtheorem{corollary}[theorem]{Corollary}
\newtheorem{definition}[theorem]{Definition}
\title{Isometries of virtual quadratic spaces}
\author{M\'at\'e L. Juh\'asz\\[2mm]
Alfr\'ed R\'enyi Institute of Mathematics\\
Hungarian Academy of Sciences\\
\texttt{juhasz.mate.lehel@renyi.mta.hu}}
\date{2016-12-29}
\let\bb=\mathbb
\DeclareMathOperator{\Ort}{Ort}
\DeclareMathOperator{\Iso}{Iso}
\DeclareMathOperator{\Rad}{Rad}
\begin{document}

\maketitle

\begin{abstract}
In this article, we introduce a new object, a virtual quadratic space, and its group of isometries. They are presented as natural generalizations of quadratic spaces and orthogonal groups. It is then shown that by replacing quadratic spaces with virtual quadratic spaces, we can unify certain enumerative properties of finite fields, without distinguishing between even and odd characteristics, such as the number of non-isomorphic non-degenerate quadratic forms, and the order of groups of isometries.
\end{abstract}

\section{Introduction}

Quadratic forms over finite fields have different properties depending on whether the characteristic of the field is $2$ or not. However, several general statements can be made without referencing the characteristic of the field. In particular, in even dimension, the number of non-degenerate quadratic forms is $2$ up to isomorphism, and the number of elements in its group of isometries is a polynomial in the number of elements of the field. However, when the dimension of a quadratic space is odd, every bilinear form is degenerate in characteristic $2$, and the number of isometries for a non-degenerate quadratic form is different if the characteristic is even.

Given a quadratic space $(V,Q)$ and a subspace $F\subseteq V$, the group of isometries that fix $F$ are in a bijection with the isometries of $(F^\perp,Q|_{F^\perp})$, provided that the associated bilinear forms on $V$ and $F$ are non-degenerate. In characteristic $2$, this is only possible if $\dim F^\perp$ is even. However, we may still consider pairs $(V,F^\perp)$ where we only assume that $\dim V$ is odd, and from \ref{Tmain}, this gives a natural generalization of quadratic spaces. Furthermore, the order of orthogonal groups is given by a polynomial that does not depend on the characteristic, as seen in \ref{Tortgroup}.

\section{Quadratic spaces}

Let us fix some field and denote it by ${\bb K}$.

\begin{definition}
A {\bf quadratic space} is a pair $(V,Q)$ consisting of a vector space and a quadratic form on it. It has an {\bf associated bilinear from} defined as $B(x,y)=Q(x+y)-Q(x)-Q(y)$. The {\bf radical} of a quadratic form $Q$ is the set of vectors $v$ such that $Q(v+u)=Q(u)$, and it is denoted by $\Rad Q$. The {\bf direct sum} of two quadratic spaces $(V,Q)$ and $(V',Q')$ is $(V\oplus V',Q\oplus Q')$ such that $(Q\oplus Q')(v\oplus v')=Q(v)+Q(v')$. An {\bf isometry} is a linear map $\varphi\colon V\to V$ such that $Q(\varphi(v))=Q(v)$ for all $v\in V$. The {\bf group of isometries} is denoted by $\Iso(V)$.
\end{definition}

\begin{definition}
Let us denote the function $u\to B(v,u)$ by $v^*$. A bilinear form is {\bf non-degenerate} or {\bf regular} if for every non-zero vector $v\in V$, the linear function $v^*$ is non-zero. A quadratic form is {\bf non-degenerate} if its associated bilinear form is non-degenerate.
\end{definition}

We need a few properties of hyperbolic spaces:

\begin{definition}
A {\bf hyperbolic space} of dimension $2$ is a pair $(\Sigma,B)$ is such that the bilinear form $B$ takes the form $B(u,v)=u_1v_2+u_2v_1$ in some basis. A hyperbolic space of dimension $2k$ is the orthogonal direct sum of $k$ hyperbolic spaces of dimension $2$ each.
\end{definition}

\begin{lemma}
\label{Thyperemb}
Assume $V$ is a vector space with a non-degenerate bilinear form $B$, and $N<V$ is a subspace with $B|_N\equiv 0$. Then there is a hyperbolic subspace $\Sigma=N\oplus\widetilde{N}$ of dimension $2\dim N$, such that $N^\perp\cap\Sigma=N$.
\end{lemma}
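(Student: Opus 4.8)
The plan is to induct on $k=\dim N$; the case $k=0$ is vacuous (take $\Sigma=0$). So suppose $k\ge 1$, fix a basis $e_1,\dots,e_k$ of $N$, and put $N_0=\langle e_2,\dots,e_k\rangle$, a totally isotropic subspace of dimension $k-1$. Throughout I would use that $B$ is symmetric and, in characteristic $2$, alternating (as holds for the associated bilinear forms considered here), together with the standard facts that for non-degenerate $B$ and any subspace $U$ one has $\dim U^\perp=\dim V-\dim U$, and $V=U\oplus U^\perp$ with $B|_{U^\perp}$ non-degenerate whenever $B|_U$ is non-degenerate.

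The central step is to peel off a single hyperbolic plane $H_1$ that absorbs $e_1$ while staying orthogonal to $N_0$. Since $N=N_0+\langle e_1\rangle$ we have $N^\perp=N_0^\perp\cap e_1^\perp$, and $\dim N^\perp=\dim V-k<\dim V-(k-1)=\dim N_0^\perp$; hence $N_0^\perp\not\subseteq e_1^\perp$, i.e.\ the functional $e_1^{*}=B(e_1,-)$ does not vanish identically on $N_0^\perp$, so we may pick $f_1\in N_0^\perp$ with $B(e_1,f_1)=1$. Note also $e_1\in N_0^\perp$ because $B|_N\equiv 0$. If $\charop{\bb K}\ne 2$, replacing $f_1$ by $f_1-\tfrac12 B(f_1,f_1)e_1$ keeps it in $N_0^\perp$, preserves $B(e_1,f_1)=1$, and makes it isotropic; if $\charop{\bb K}=2$ then $B(f_1,f_1)=0$ already. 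Now on $H_1=\langle e_1,f_1\rangle$ we have $B(e_1,e_1)=B(f_1,f_1)=0$ and $B(e_1,f_1)=B(f_1,e_1)=1$, so $H_1$ is a non-degenerate hyperbolic plane with $H_1\perp N_0$.

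Set $W=H_1^\perp$; then $V=H_1\oplus W$, the form $B|_W$ is non-degenerate, and $N_0\subseteq W$ (each $e_i$ with $i\ge 2$ is orthogonal to $e_1$ since $B|_N\equiv 0$, and to $f_1$ since $f_1\in N_0^\perp$), with $N_0$ totally isotropic in $W$. By the induction hypothesis applied inside $(W,B|_W)$ there is a hyperbolic $\Sigma_0=N_0\oplus\widetilde{N_0}\subseteq W$ with $\Sigma_0\cap N_0^{\perp_W}=N_0$, where $\perp_W$ denotes the perpendicular taken in $W$. Put $\Sigma=H_1\oplus\Sigma_0$ and $\widetilde N=\langle f_1\rangle\oplus\widetilde{N_0}$; then $\Sigma=N\oplus\widetilde N$ is an orthogonal sum of $k$ hyperbolic planes, hence hyperbolic of dimension $2k$.

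Finally I would verify $N^\perp\cap\Sigma=N$. The inclusion $\supseteq$ holds since $N$ is totally isotropic and contained in $\Sigma$. Conversely, take $x\in N^\perp\cap\Sigma$ and write $x=ae_1+bf_1+y$ with $y\in\Sigma_0\subseteq W$. Pairing with $e_1$ annihilates every term but $b\,B(e_1,f_1)=b$ (here $B(e_1,y)=0$ as $y\in W=H_1^\perp$), so $b=0$; pairing with each $e_i$, $i\ge 2$, annihilates the $e_1$- and $f_1$-terms and leaves $B(e_i,y)=0$, so $y\in N_0^{\perp_W}$, whence $y\in\Sigma_0\cap N_0^{\perp_W}=N_0$ by the induction hypothesis. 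Thus $x=ae_1+y\in N$. The one genuinely delicate point is the construction of $f_1$: it is essential to split off $e_1$ alone rather than build all of $\widetilde N$ at once, because only then can orthogonality to $N_0$ be imposed on $f_1$ together with $B(e_1,f_1)\ne 0$; everything else is bookkeeping with perpendicular complements plus a routine isotropy correction in odd characteristic.
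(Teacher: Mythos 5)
Your proof is correct and takes essentially the same route as the paper's: induct on $\dim N$, split off a single hyperbolic plane $\langle e_1,f_1\rangle$, and recurse inside its orthogonal complement, with only bookkeeping differences (you fix the complement $N_0$ in advance and force $f_1\in N_0^\perp$, while the paper recurses with $N\cap\Sigma_0^\perp$). You are in fact more careful than the paper at two points it glosses over: the isotropy correction $f_1\mapsto f_1-\tfrac12 B(f_1,f_1)e_1$ in odd characteristic, and the remark that in characteristic $2$ the lemma needs $B$ to be alternating, as it is for the associated bilinear forms used throughout the paper.
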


\begin{proof}
We will construct the spaces $\widetilde{N}$ and $\Sigma$ recursively. Choose a vector $u$ in $N$. Since $B$ is non-degenerate, there is a vector $v$ such that $B(u,v)=1$. Since $B|_N\equiv 0$, clearly $v\not\in N$. Furthermore, $\Sigma_0=\langle u,v\rangle$ is such that $B|_{\Sigma_0^\perp}$ is non-degenerate, hence we may apply the construction to $V':=\Sigma_0^\perp$ and $N':=N\cap V'$, unless $N'=\{0\}$, in which case we are done. The construction will give us $\widetilde{N}'$ and $\Sigma'$ of dimension $2\dim N'=2(\dim N-1)$, since $N<u^\perp$ and $u\in v^\perp$, and we may choose $\Sigma:=\Sigma_0\oplus\Sigma'$, $\widetilde{N}:=\langle v\rangle\oplus\widetilde{N}'$.
\end{proof}

The following lemma shows that in characteristic $2$, one can not construct a non-degenerate quadratic form in odd dimensions:

\begin{lemma}
\label{Thyper2}
Given a non-degenerate quadratic space $(V,Q)$ in characteristic $2$, its associated bilinear form gives $(V,B)$ a hyperbolic space structure.
\end{lemma}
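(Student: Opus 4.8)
The plan is to exploit the defining feature of characteristic $2$. Since $B(v,v)=Q(2v)-2Q(v)=Q(0)-0=0$ for every $v\in V$, the associated bilinear form $B$ is alternating, so \emph{every} one-dimensional subspace of $V$ is totally isotropic for $B$. Combined with non-degeneracy, this ought to force $(V,B)$ to decompose as an orthogonal sum of hyperbolic planes.

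First I would choose a subspace $N\subseteq V$ maximal with respect to the property $B|_N\equiv 0$ (such an $N$ exists because $V$ is finite-dimensional, and it is nonzero whenever $V\neq\{0\}$, since every line is totally isotropic). Applying Lemma \ref{Thyperemb} to $N$ produces a hyperbolic subspace $\Sigma=N\oplus\widetilde{N}$ with $\dim\Sigma=2\dim N$ and $N^\perp\cap\Sigma=N$. Because $\Sigma$ is hyperbolic, $B|_\Sigma$ is non-degenerate (in a hyperbolic basis its Gram matrix is block-diagonal with invertible $2\times 2$ blocks), so $V=\Sigma\oplus\Sigma^\perp$ and $\Sigma\cap\Sigma^\perp=\{0\}$.

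It then remains to show $\Sigma^\perp=\{0\}$, which yields $V=\Sigma$ and finishes the proof. Suppose not, and pick $w\in\Sigma^\perp$ with $w\neq 0$. Since $N\subseteq\Sigma$ we have $\Sigma^\perp\subseteq N^\perp$, so $B(n,w)=0$ for all $n\in N$; together with $B(w,w)=0$ (characteristic $2$) and $B|_N\equiv 0$, this makes $N+\langle w\rangle$ totally isotropic. Moreover $w\notin N$, for otherwise $w\in N\subseteq\Sigma$ and $w\in\Sigma^\perp$ would give $w\in\Sigma\cap\Sigma^\perp=\{0\}$. Hence $N+\langle w\rangle=N\oplus\langle w\rangle$ strictly contains $N$, contradicting maximality; so $\Sigma^\perp=\{0\}$ and $(V,B)$ is the hyperbolic space $\Sigma$.

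The only real obstacle is the bookkeeping showing that a maximal totally isotropic subspace becomes ``Lagrangian'' after passing to $\Sigma$; the remaining ingredients — that $B$ is alternating, that the form on a hyperbolic space is non-degenerate, and that a non-degenerate restriction splits off an orthogonal complement — are routine. As an alternative to the maximality argument one could induct on $\dim V$: take $u\neq 0$, use non-degeneracy to find $v$ with $B(u,v)=1$, observe that $\langle u,v\rangle$ is a hyperbolic plane on which $B$ is non-degenerate, split it off, and apply the inductive hypothesis to $\langle u,v\rangle^\perp$, whose form is again non-degenerate and alternating, with the base case $\dim V=0$ being the empty orthogonal sum.
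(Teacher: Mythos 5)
Your argument is correct, but it takes a genuinely different route from the paper. The paper invokes the classification of quadratic spaces in characteristic $2$ (citing Kitaoka): every quadratic space splits orthogonally into summands of the forms $Ax^2$ and $A(x^2+xy+By^2)$, the one-dimensional summands are excluded because their associated bilinear form vanishes identically and would violate non-degeneracy, and the two-dimensional summands are hyperbolic planes for $B$. You instead ignore $Q$ entirely after extracting the single fact that $B(v,v)=0$ in characteristic $2$, and prove the standard symplectic statement that a non-degenerate alternating form admits a hyperbolic (symplectic) basis: either by taking a maximal totally isotropic $N$, enlarging it to a hyperbolic $\Sigma=N\oplus\widetilde{N}$ via Lemma \ref{Thyperemb}, splitting $V=\Sigma\oplus\Sigma^\perp$, and using maximality of $N$ to kill $\Sigma^\perp$ (your bookkeeping here --- $w\notin N$ because $\Sigma\cap\Sigma^\perp=\{0\}$, and $N\oplus\langle w\rangle$ is totally isotropic because $B$ is alternating and $\Sigma^\perp\subseteq N^\perp$ --- is sound); or by the direct induction splitting off one hyperbolic plane $\langle u,v\rangle$ at a time, which is essentially the same recursion as the paper's proof of Lemma \ref{Thyperemb} itself. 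What your approach buys is self-containedness and economy of hypotheses: the lemma's conclusion concerns only $(V,B)$, and your proof uses only $B$, with no appeal to the structure theory of quadratic forms in characteristic $2$. What the paper's approach buys is brevity (outsourcing the decomposition to the reference) and slightly more information, since the Kitaoka decomposition is stated at the level of the quadratic form $Q$ rather than just its polarization.
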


\begin{proof}
It can be checked that any quadratic space decomposes as the direct sum of quadratic spaces of two kinds: those of the form $Ax^2$ and $A(x^2+xy+By^2)$. Since $B(u,u)=0$ for any $u$ in characteristic $2$, the first kind may not appear in the decomposition, while the second kind gives a hyperbolic space. For details, see \cite{kitaoka1993}.
\end{proof}

Since the dimension of a hyperbolic space is even, a non-degenerate quadratic space must have even dimension.

\section{Virtual quadratic spaces}

\begin{definition}
A {\bf virtual quadratic space} is a tuple $(V,Q,U)$ or $(V,U)$ for short, with $U$ a subspace of a vector space $V$ and $Q$ a non-degenerate quadratic form on $V$. Its {\bf dimension} is $\dim U$. An {\bf isometry} of the virtual quadratic space is an isometry of $(V,Q)$ that fixes $U^\perp$. The group of isometries is denoted by $\Iso(V,U)$.
\end{definition}

In general, we are only interested in the quadratic subspace $(U,Q)$, and we only use $V$ as an aid in theorems. As such, we must establish whether such a $(V,Q)$ exists for a quadratic space $(U,Q)$, and whether it is unique. First let us look at the question of existence.

\begin{proposition}
\label{Tembed}
Any quadratic space $U$ can be embedded into some virtual quadratic space $(V,U)$.
\end{proposition}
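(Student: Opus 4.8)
The plan is to isolate the single obstruction preventing $(U,Q)$ from being non-degenerate, namely the radical $N:=\Rad B$ of its associated bilinear form, and to destroy it by adjoining to $V$ a subspace that pairs non-degenerately with $N$ yet stays totally isotropic, so that the quadratic form on $U$ is left untouched.

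First I would split off the non-degenerate part of $U$. Pick a vector-space complement $U_0$ of $N$ in $U$. Then $B|_{U_0}$ is non-degenerate: if $0\neq v\in U_0$, choose $w\in U$ with $B(v,w)\neq 0$ and replace $w$ by its $U_0$-component, which does not change $B(v,w)$ because $B(v,n)=0$ for every $n\in N\subseteq\Rad B$. Since moreover $B(U_0,N)=0$, the decomposition $U=U_0\oplus N$ is orthogonal both for $B$ and for $Q$ (as $Q(u_0+n)=Q(u_0)+Q(n)+B(u_0,n)=Q(u_0)+Q(n)$), so $(U,Q)=(U_0,Q|_{U_0})\oplus(N,Q|_N)$ with the first summand already non-degenerate. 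It therefore suffices to embed $(N,Q|_N)$ — a quadratic space whose associated bilinear form vanishes identically — into a non-degenerate quadratic space, and then form the orthogonal direct sum back with $(U_0,Q|_{U_0})$.

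To embed $(N,Q|_N)$, fix a basis $(n_i)_{i\in I}$ of $N$, let $W$ be freely spanned by new symbols $(f_i)_{i\in I}$, and set $V_N=N\oplus W$. Recall that a quadratic form on a space equipped with a basis is freely determined by its values on the basis vectors together with the values of its associated bilinear form on distinct pairs of basis vectors. Let $Q'$ be the quadratic form on $V_N$ with $Q'(n_i)=Q(n_i)$, $Q'(f_i)=0$, $B'(n_i,f_i)=1$, and $B'$ equal to $0$ on every other pair of distinct basis vectors. Then $V_N$ is the orthogonal direct sum, with respect to $B'$, of the two-dimensional subspaces $\langle n_i,f_i\rangle$, each of which has Gram matrix of determinant $-1$; hence $B'$ is non-degenerate. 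Also $Q'|_N=Q|_N$, since $B'|_N\equiv 0$ forces both to equal $\sum_i Q(n_i)x_i^2$. Taking $V:=U_0\oplus V_N$ with $Q:=Q|_{U_0}\oplus Q'$ now gives a non-degenerate quadratic space that contains $U=U_0\oplus N$ and restricts to the original form there, so $(V,Q,U)$ is the required virtual quadratic space.

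There is no real obstacle in the argument, but the point that keeps the statement from being a triviality is the characteristic-$2$ phenomenon behind Lemma \ref{Thyper2}: the bilinear radical $N$ may strictly contain $\Rad Q$, so $Q$ can be nonzero, even anisotropic, on $N$, and this quadratic data must be carried into $V$ rather than thrown away. The construction handles it precisely because $W$ is totally isotropic and orthogonal to $U$, so passing to $V$ alters neither $Q|_N$ nor anything else on $U$; in odd characteristic one has $N\subseteq\Rad Q$ and the second step merely rebuilds a hyperbolic space.
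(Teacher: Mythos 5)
Your proof is correct and follows essentially the same route as the paper: adjoin a totally isotropic copy of (the dual of) $N=U\cap U^\perp$, paired non-degenerately with $N$ and carrying no new quadratic data, so that $Q|_U$ is untouched and the resulting bilinear form is non-degenerate. The only difference is cosmetic: you first split $U=U_0\oplus N$ orthogonally and make the new subspace orthogonal to $U_0$, which turns the paper's ``it can be verified'' non-degeneracy check into an explicit orthogonal-block computation, whereas the paper extends the functionals $f_i$ from $N$ to $U$ arbitrarily.
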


\begin{proof}
Let us denote $N:=U\cap U^\perp$. Since $B|_N\equiv 0$, we shall embed $N$ into a hyperbolic space. Define $V:=\widetilde{N}\oplus U$ where $\widetilde{N}$ is isomorphic as a vector space to $N$. We shall give $\widetilde{N}\oplus N$ a hyperbolic space structure in the following way. Let us choose a basis $f_i$ for $N^*$ and the equivalent $\tilde f_i$ for $\widetilde{N}^*$, and extend the $f_i$ to $U$ in an arbitrary manner.
Now for $\tilde u\oplus v\in V$, with $u\in N$ and $v\in U$, define $\tilde Q(\tilde u\oplus v)=Q(v)+\sum \tilde f_i(\tilde u)f_i(v)$.
It can be verified that this quadratic form has non-degenerate associated bilinear form.
\end{proof}

Uniqueness of $V$ can of course not be guaranteed, but we may look for a {\it minimal} virtual quadratic space, and show some type of uniqueness for that. The following theorem shows what such a space looks like, and how we may characterize this minimality.

\begin{proposition}
\label{Tminimal}
For any virtual quadratic space $(V,Q,U)$ with associated bilinear form $B$, there is a subspace $U\le V_m\le V$ with $B|_{V_m}$ non-degenerate, such that $U^\perp\cap V_m\subseteq U$. For such a subspace, $\Iso(V,U)=\Iso(V_m,U)$. A virtual quadratic space $(V,U)$ is called {\bf minimal} if $U^\perp\subseteq U$, and $\dim V=\dim U+\dim(U\cap U^\perp)$ only depends on $Q|_{U}$ for minimal virtual quadratic spaces.
\end{proposition}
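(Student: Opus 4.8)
The key observation is that $N:=U\cap U^\perp$ (orthogonal complements are always taken in $V$) equals $\Rad(B|_U)$, so $N$, and in particular $\dim N$, depends only on $Q|_U$; the plan is to obtain $V_m$ by keeping $U$ and adjoining just enough of $V$ to make $N$ one half of a hyperbolic space. Since $B$ is non-degenerate on $V$ and $B|_N\equiv0$, Lemma~\ref{Thyperemb} provides a hyperbolic subspace $\Sigma=N\oplus\widetilde N\subseteq V$ with $\dim\Sigma=2\dim N$ and $N^\perp\cap\Sigma=N$, and I would set $V_m:=U+\widetilde N$. The sum is direct: a vector $x\in\widetilde N\cap U$ lies in $U$, hence in $N^\perp$ (recall $B(N,U)=0$), hence in $N^\perp\cap\Sigma=N$, so $x\in N\cap\widetilde N=\{0\}$; thus $\dim V_m=\dim U+\dim N$. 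For non-degeneracy of $B|_{V_m}$, let $w=u+\tilde n\in V_m$ ($u\in U$, $\tilde n\in\widetilde N$) be orthogonal to all of $V_m$; pairing $w$ with $N\subseteq U$ and using $B(u,N)=0$ gives $B(\tilde n,N)=0$, so $\tilde n\in N^\perp\cap\widetilde N\subseteq N$, hence $\tilde n\in N\cap\widetilde N=\{0\}$; then $w=u\in U$ is orthogonal to $U$, so $u\in\Rad(B|_U)=N\subseteq\Sigma$, and being orthogonal to $\widetilde N$ as well it is orthogonal to all of $\Sigma$, forcing $u=0$ since $B|_\Sigma$ is non-degenerate. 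The identical computation, applied to $w\in U^\perp\cap V_m$, shows $U^\perp\cap V_m\subseteq U$. This is the one genuine computation in the proof, and the property $N^\perp\cap\Sigma=N$ of the hyperbolic embedding is exactly what drives it; I expect everything after this to be formal. Note that this $V_m$ is itself minimal, being a virtual quadratic space with $U^\perp\cap V_m\subseteq U$.

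For the equality of isometry groups, fix any subspace $V_m$ with $U\le V_m\le V$ and $B|_{V_m}$ non-degenerate. Then $V=V_m\oplus V_m^\perp$ is an orthogonal direct sum, $B|_{V_m^\perp}$ is non-degenerate, and $V_m^\perp\subseteq U^\perp$. If $\varphi\in\Iso(V,U)$, then $\varphi$ fixes $U^\perp$, hence $V_m^\perp$, pointwise, so $\varphi(V_m^\perp)=V_m^\perp$, and since isometries of a non-degenerate space commute with $\perp$, $\varphi(V_m)=\varphi\bigl((V_m^\perp)^\perp\bigr)=\bigl(\varphi(V_m^\perp)\bigr)^\perp=V_m$; thus $\varphi|_{V_m}$ is an isometry of $(V_m,Q|_{V_m})$ fixing $U^\perp\cap V_m$ pointwise, i.e.\ $\varphi|_{V_m}\in\Iso(V_m,U)$. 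Conversely, $\psi\in\Iso(V_m,U)$ extends to $\psi\oplus\mathrm{id}_{V_m^\perp}$, which is an isometry of $(V,Q)$ because the summands are orthogonal and which fixes $U^\perp=(U^\perp\cap V_m)\oplus V_m^\perp$ pointwise since $\psi$ fixes $U^\perp\cap V_m$. As $\varphi$ is already the identity on $V_m^\perp$, these two assignments are mutually inverse group homomorphisms, so $\Iso(V,U)=\Iso(V_m,U)$.

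For the dimension statement, suppose $(V,Q,U)$ is minimal, i.e.\ $U^\perp\subseteq U$. Then $U\cap U^\perp=U^\perp$, and non-degeneracy of $B$ on $V$ gives $\dim U+\dim U^\perp=\dim V$; together these are exactly the asserted identity $\dim V=\dim U+\dim(U\cap U^\perp)$. Since $U\cap U^\perp=\Rad(B|_U)=N$ in any virtual quadratic space, this common value equals $\dim U+\dim\Rad(B|_U)$, which is determined by the bilinear form associated to $Q|_U$, hence by $Q|_U$ alone.
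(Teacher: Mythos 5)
Your proof is correct, and its core is the same as the paper's: both invoke Lemma~\ref{Thyperemb} to produce $\Sigma=N\oplus\widetilde N$ with $N^\perp\cap\Sigma=N$ and take $V_m=U+\Sigma$ (your $U+\widetilde N$ is the same space, as $N\subseteq U$). Where you diverge is in the verifications, and the differences are real: the paper further decomposes $U=N\oplus M$ and $V=M\oplus\widehat M\oplus\Sigma$ with $\widehat M=M^\perp\cap\Sigma^\perp$, reads off non-degeneracy, $U^\perp=\widehat M\oplus N$, and the dimension count from that decomposition, and proves the isometry-group equality only for its constructed $V_m$ by restricting to $\widehat M^\perp=V_m$; you avoid the $M,\widehat M$ bookkeeping entirely, check non-degeneracy and $U^\perp\cap V_m\subseteq U$ by one direct pairing computation, prove $\Iso(V,U)=\Iso(V_m,U)$ for an \emph{arbitrary} subspace $U\le V_m\le V$ with $B|_{V_m}$ non-degenerate (via $V=V_m\oplus V_m^\perp$, restriction in one direction and extension by the identity on $V_m^\perp$ in the other, which the paper leaves implicit), and get the dimension formula in one line from $U^\perp\subseteq U$ and $\dim U+\dim U^\perp=\dim V$ together with $U\cap U^\perp=\Rad(B|_U)$. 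Your version matches the statement's ``for such a subspace'' generality more faithfully and is leaner; the paper's heavier decomposition is not wasted, though, since its notation ($M$, $\widehat M$, $\Sigma$) is explicitly reused in later proofs such as that of Proposition~\ref{Tisounique}. The only point worth making explicit in your write-up is that isometries of $(V,Q)$ are bijective (immediate from non-degeneracy of $B$ in finite dimension), which is what licenses $\varphi(W^\perp)=\varphi(W)^\perp$.
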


\begin{proof}
We don't actually need $Q$ in the proof of the first statement, and may only look at $B$. We will proceed by constructing the same space as in \ref{Tembed}. Let us fix $N:=U\cap U^\perp$. Since $B|_N\equiv 0$, by \ref{Thyperemb} we may embed $N$ into a subspace $\Sigma=N\oplus\widetilde{N}$ of dimension $2\dim N$, and $B|_\Sigma$ is clearly non-degenerate, giving $\Sigma\cap\Sigma^\perp=\{0\}$.

We can see that $U\cap\Sigma=N$, since $U\cap\Sigma\supset N$ by the definition of $N$ and $\Sigma$, and if $u\in U\cap\Sigma$, then $u\perp N$, but $u\in N^\perp\cap\Sigma=N$ by the construction of $\Sigma$. For similar reasons, $U^\perp\cap\Sigma=N$.

Therefore $U$ decomposes as orthogonal subspaces $N\oplus M$ with $M=U\cap\Sigma^\perp$. Furthermore, $M\cap M^\perp=\{0\}$, since for all $u\in M\subseteq U\setminus N$, there is a $v\in U$ such that $u\not\perp v$. Then $v$ decomposes as $v_M\oplus v_N$ with $v_M\in M$ and $v_N\in N$, and since $u\perp v_N$, we have $u\not\perp v_M\in M$. Therefore $B|_M$ is non-degenerate. Also, $V$ decomposes as orthogonal subspaces $M\oplus \widehat{M}\oplus\Sigma$ with $\widehat{M}=M^\perp\cap\Sigma^\perp$.

Let us define $V_m:=U+\Sigma$, which is $M\oplus\Sigma$ as an orthogonal decomposition. Clearly $(V_m, U)$ is a virtual quadratic space, since $(V_m,Q)$ is non-degenerate as $V_m\cap V_m^\perp=\{0\}$. Since $U^\perp\cap\Sigma=N\subseteq U$, we have $U^\perp\cap(M\oplus\Sigma)\subseteq U$, and so $(V_m,U)$ is minimal.

Clearly $\dim V_m=\dim M+\dim\Sigma=\dim U+\dim N$. Given a virtual quadratic space $(V, U)$, the above construction gives a minimal subspace $(V_m,U)$. Since $V$ decomposes as orthogonal subspaces $M\oplus \widehat{M}\oplus\Sigma$, we get $U^\perp=M^\perp\cap N^\perp=(\widehat{M}\oplus\Sigma)\cap(M\oplus N\oplus\widehat{M})=\widehat{M}\oplus N$. Then $U\subseteq U^\perp$ if and only if $\dim\widehat{M}=0$ and $V=V_m$. Therefore the dimension formula holds for all minimal spaces.

Finally, since $U^\perp=\widehat{M}\oplus N$, $\Iso(V,U)$ fixes $U^\perp\supseteq\widehat{M}$, and the action restricts to $\widehat{M}^\perp$, giving $\Iso(V,U)=\Iso(\widehat{M}^\perp,U\cap \widehat{M}^\perp)=\Iso(V_m,U)$.
\end{proof}

This proof tells us not only that a minimal virtual quadratic space exists, it also shows us the structure it has, and we shall use the notations introduced in this proof in other propositions as well.

However, since the theorem extends only the bilinear form to $V$, and in characteristic $2$ that does not define the quadratic form, the minimal virtual quadratic space $(V,U)$ containing $U$ is still not unique. Fortunately, this is not an issue, at least for the isometry groups, as seen from the following theorem.

\begin{proposition}
\label{Tisounique}
For any non-degenerate virtual quadratic space $(V,U)$, the group $\Iso(V,U)$ depends only on $Q|_{U}$. In particular, if given two non-degenerate virtual quadratic spaces $(V,Q,U)$ and $(V',Q',U')$, with $(U,Q|_U)$ and $(U',Q'|_{U'})$ isomorphic as quadratic spaces, then there is a bijection between $\Iso(V,U)$ and $\Iso(V',U')$.
\end{proposition}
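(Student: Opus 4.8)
The plan is to reduce to a minimal virtual quadratic space and then exhibit $\Iso(V,U)$ as an extension of two groups that visibly depend only on the quadratic space $(U,Q|_{U})$. By \ref{Tminimal} we have $\Iso(V,U)=\Iso(V_m,U)$, so I may assume $(V,U)$ is minimal and use the notation of the proof of \ref{Tminimal}: $V=M\oplus\Sigma$ with $\Sigma=N\oplus\widetilde N$ a hyperbolic space in which $N$ and $\widetilde N$ are totally isotropic and in perfect duality, $U=M\oplus N$ is an orthogonal decomposition, $B|_M$ is non-degenerate, $N=U\cap U^\perp$ is the radical of $B|_U$ (an invariant of $(U,Q|_U)$), and $U^\perp=N$, $N^\perp=U$ inside $V$. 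Any $\varphi\in\Iso(V,U)$ fixes $U^\perp=N$ pointwise and preserves $B$, hence preserves $N^\perp=U$, so restriction to $U$ defines a homomorphism $r\colon\Iso(V,U)\to G$ with $G:=\{\psi\in\Iso(U,Q|_U):\psi|_N=\mathrm{id}\}$; this group $G$ is defined purely in terms of $(U,Q|_U)$.

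Next I would compute $H:=\ker r=\{\varphi\in\Iso(V,U):\varphi|_U=\mathrm{id}\}$. For $\varphi\in H$ and $w\in\widetilde N$ one has $B(\varphi(w)-w,u)=B(\varphi w,\varphi u)-B(w,u)=0$ for all $u\in U$, so $\varphi(w)-w\in U^\perp=N$; thus $\varphi$ fixes $U$ pointwise and sends $w\mapsto w+\nu(w)$ on $\widetilde N$ for some linear $\nu\colon\widetilde N\to N$, and composing two such maps adds the corresponding $\nu$'s, so $H$ is abelian. A direct check --- using that $M\perp N$, that $N$ and $\widetilde N$ are totally isotropic, that $N\subseteq U^\perp$, and that a $B$-isometry which preserves $Q$ on a basis preserves $Q$ everywhere --- shows that such a map lies in $H$ exactly when $B(w,\nu w')+B(w',\nu w)=0$ and $Q(\nu w)+B(w,\nu w)=0$ hold for all $w,w'\in\widetilde N$. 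These conditions refer only to the perfect pairing $\widetilde N\times N\to{\bb K}$ and to the additive map $Q|_N\colon N\to{\bb K}$; in particular they do not involve the values of $Q$ on $\widetilde N$. Hence, up to isomorphism, $H$ depends only on $\dim N$ and $Q|_N$, and therefore only on $(U,Q|_U)$.

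The remaining point is that $r$ is onto $G$. Given $\psi\in G$, I would extend it to an isometry $\varphi$ of the non-degenerate quadratic space $(V,Q)$ --- by Witt's extension theorem, or, in characteristic $2$, by an explicit extension constructed from the hyperbolic summand $\Sigma$; since $\psi$ already fixes $N=U^\perp$, such a $\varphi$ automatically lies in $\Iso(V,U)$ and $r(\varphi)=\psi$. Thus there is a short exact sequence $1\to H\to\Iso(V,U)\xrightarrow{r}G\to 1$ in which both $H$ and $G$ depend only on $(U,Q|_U)$; choosing a set-theoretic section of $r$ identifies $\Iso(V,U)$ with $G\times H$ as a set, and in the finite case $|\Iso(V,U)|=|G|\cdot|H|$ is a function of $(U,Q|_U)$ alone. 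An isometry $(U,Q|_U)\xrightarrow{\sim}(U',Q'|_{U'})$ carries $N$ isometrically onto $N'$ and hence induces bijections $G\cong G'$ and $H\cong H'$ and, through the sections, the desired bijection $\Iso(V,U)\cong\Iso(V',U')$; with a little more care the extension data transports as well and one obtains a group isomorphism.

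I expect the surjectivity of $r$ to be the real obstacle, since it rests on a Witt-type extension statement that is delicate in characteristic $2$; the kernel computation and the bookkeeping are routine. This step is also where the substance of the proposition lies: one cannot argue by extending the given isometry $U\to U'$ to an isometry $V\to V'$, because in characteristic $2$ the values of $Q$ on $\widetilde N$ form a free parameter (constrained only to induce the hyperbolic bilinear form), and different choices can make $(V,Q)$ and $(V',Q')$ non-isometric as quadratic spaces even when $(U,Q|_U)\cong(U',Q'|_{U'})$. The content of the argument is precisely that this free parameter is invisible to $\Iso(V,U)$ --- invisible to $H$ by the explicit description above, and invisible to the image of $r$ because the extending isometries fix $N$.
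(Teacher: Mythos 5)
Your argument is correct, but it takes a genuinely different route from the paper's. After the same reduction to a minimal space, the paper identifies the two ambient spaces so that $B=B'$ and the given isometry of the $U$'s becomes the identity, and then studies the difference $\delta Q=Q'-Q$: in characteristic $2$ this is an additive map vanishing on $U$, while every $\varphi\in\Iso(V,U)$ acts trivially on $V/U$ (it fixes $U^\perp$ pointwise and preserves $B$, so $\varphi(x)-x\in(U^\perp)^\perp=U$); hence $\delta Q$ is preserved and $\Iso_Q(V,U)$ and $\Iso_{Q'}(V,U)$ are literally the same subgroup of $\mathrm{GL}(V)$ --- a group isomorphism obtained with no extension theorem at all (the characteristic $\ne 2$ case being immediate since $B$ determines $Q$). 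You instead realize $\Iso(V,U)$ as an extension $1\to H\to\Iso(V,U)\to G\to 1$ with both ends intrinsic to $(U,Q|_U)$; this is sound: your kernel computation is right (the condition $Q(\nu w)+B(w,\nu w)=0$ indeed never involves $Q|_{\widetilde N}$, using $M\perp\Sigma$ and $B|_N\equiv 0$), and the surjectivity you worry about is exactly Theorem \ref{Tkitaoka} applied to the non-degenerate ambient $(V,Q)$ --- the extension of $\psi$ restricts to $\psi$ on $U$, hence fixes $N=U^\perp$ pointwise and lies in $\Iso(V,U)$ --- which is the same device the paper uses later for Proposition \ref{Tisosurj}, so you are on the paper's own footing there. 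The trade-off: the paper's proof is shorter, avoids Witt extension in characteristic $2$, and yields a group isomorphism on the nose, whereas your construction as written gives the asserted bijection (via sections) and only a group isomorphism after transporting the extension data, as you acknowledge; also your identification $H\cong H'$ needs a choice of identifications of $\widetilde N,\widetilde N'$ compatible with the perfect pairings against $N\cong N'$, which is harmless but should be said. What your route buys is the explicit description of $H$ and the factorization $|\Iso(V,U)|=|G|\cdot|H|$, which anticipates the paper's final computation: the order-$2$ kernel found there for odd dimension over ${\bb F}_q$ in characteristic $2$ is precisely your $H$ with $\dim N=1$.
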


\begin{proof}
Consider two virtual quadratic spaces $(V,U)$ and $(V',U')$ such that $Q|_{U}\cong Q'|_{U'}$. First we may assume that they are both minimal, as all such spaces have a minimal subspace with an isometry group isomorphic to the isometry group of the containing space, in which case $\dim V=\dim V'$. We may assume that $V=V'$ and $U=U'$. Then by introducing the difference $\delta Q=Q'-Q$, we get $\delta Q|_{U}\equiv 0$.

If the characteristic is not $2$, then two quadratic spaces are isomorphic if an isomorphism of the vector spaces identifies their bilinear forms. By using the notations of the previous proof, $U$ decomposes as a direct sum of orthogonal subspaces $N\oplus M$, where $N=U\cap U^\perp$ and $\Sigma$ is a hyperbolic subspace of dimension $2\dim N$ containing $N$.
Since the pair $(V,U)$ is minimal, $V=M\oplus\Sigma$. By introducing the analoguous symbols for the pair $(V',U')$, all the components are isomorphic, hence we can choose the isomorphism that sends $V=M\oplus\Sigma$ to $V'=M'\oplus\Sigma'$ component-wise. Under this isomorphism, $Q|_V=Q'|_V$, so the theorem becomes a trivial condition.

Let us look at the characteristic $2$ case, where the associated bilinear form is always hyperbolic by \ref{Thyper2}.
Since $B|_{U}=B'|_{U'}$, we may fix an identification between $V$ and $V'$ that extends the isometric map $U\to U'$ in a way such that $B=B'$. Then the map $\delta Q(x):=Q'(x)-Q(x)$ is an additive map.

Let us choose an automorphism $\varphi\in\Iso_Q(V,U)$. In order to prove that $\varphi$ is also in $\Iso_{Q'}(V,U)$, it is sufficient to show that $\varphi$ preserves the function $\delta Q$, since then $Q'(\varphi(x))=Q(\varphi(x))+\delta Q(\varphi(x))=Q(x)+\delta Q(x)=Q'(x)$.

Since $U^\perp$ is fixed under the automorphism $\varphi$, the scalar product functions $B(u,.)$ are preserved for all $u\in U^\perp$, and in particular, the subspace $U$ is preserved. Equivalently, the automorphism acts trivially on the quotient vector space $V/U$, since for any $x$, assuming that $\delta x:=\varphi(x)-x\not\in U$, there is at least one associated $u\in U^\perp$ such that $B(u,\delta x)\ne 0$, which would contradict the preservation of the function $B(u,.)$.

Since the function $\delta Q$ is additive and vanishes on $U$, it is well defined on the quotient additive group $V/U$, which is naturally isomorphic to the quotient vector space $V/U$. On $V/U$, $\varphi$ acts trivially, and so $\delta Q$ is preserved.
\end{proof}

These two propositions motivate the following definition:

\begin{definition}
Two virtual quadratic spaces $(V,Q,U)$ and $(V',Q',U')$ are {\bf isomorphic} if $(U,Q|_U)$ and $(U',Q|_{U'})$ are isomorphic quadratic spaces.
\end{definition}

Now we will show a relationship between the isometry groups $\Iso(U)$ and $\Iso(V,U)$. It is known (see \cite{kitaoka1993}) that in a non-degenerate quadratic space, the group of isometries acts transitively on non-zero vectors of equal norm:

\begin{theorem}
\label{Tkitaoka}
Given a quadratic space $U$ and two subspaces $V$ and $W$ such that $V\cap U^\perp=W\cap U^\perp=\{0\}$ with an isometry $\sigma\colon V\to W$, it extends to an isometry of $U$.
\end{theorem}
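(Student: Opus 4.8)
The plan is to view this as Witt's extension theorem, in the form that remains valid when the quadratic space $U$ is degenerate; the conditions $V\cap U^\perp=W\cap U^\perp=\{0\}$ say precisely that $V$ and $W$ avoid the radical $\Rad B=U^\perp$, and this is what makes the usual proof survive degeneracy (an issue only in characteristic $2$). I would argue by induction on $\dim V$, using the quoted transitivity result of \cite{kitaoka1993} as the engine of the base case.

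For the inductive step I would split off a small non-degenerate subspace of $V$. If $B|_V\not\equiv 0$, choose $v,v'\in V$ with $B(v,v')\ne 0$; then $P=\langle v,v'\rangle$ is non-degenerate, so $U=P\perp P^\perp$ and $V=P\perp(V\cap P^\perp)$. Extending $\sigma|_P$ to some $\tau\in\Iso(U)$ (move $v$ to $\sigma(v)$ by the transitivity result, then correct $v'$ inside the stabiliser of $\sigma(v)$) and replacing $\sigma$ by $\tau^{-1}\sigma$, I may assume $\sigma$ fixes $P$ pointwise; then $\sigma$ preserves $P^\perp$ and $\sigma|_{V\cap P^\perp}$ is an isometry of subspaces of the quadratic space $P^\perp$ that still avoids $\Rad(B|_{P^\perp})$ (which is just $U^\perp$) and is of strictly smaller dimension. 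By induction it extends to $\rho\in\Iso(P^\perp)$, and $\mathrm{id}_P\perp\rho$, composed with $\tau$, extends $\sigma$. If instead $B|_V\equiv 0$ with $V\ne\{0\}$, then by \ref{Thyperemb} a chosen nonzero $v\in V$ lies in a hyperbolic plane $H\le U$; moving $v$ to $\sigma(v)$ and then, inside the stabiliser of $\sigma(v)$, moving the image of the complementary hyperbolic generator into place, I reduce to $\sigma$ fixing $H$ pointwise and pass to $H^\perp$ exactly as before, again lowering $\dim V$. The radical of $U$ never has to be touched: it sits inside every $P^\perp$ or $H^\perp$ and is simply carried along.

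The base case $\dim V=1$ is the statement that $\Iso(U)$ is transitive on non-radical vectors of a fixed norm. When $U$ is non-degenerate this is the cited result. In general, a non-radical vector $v$ lies in a non-degenerate subspace $P_v\le U$ of dimension $\le 2$ with $U=P_v\perp P_v^\perp$ (built from a vector not orthogonal to $v$), so the degenerate case follows from the non-degenerate one applied to $P_v$, together with the isomorphism of the complements obtained by Witt cancellation of $P_v$.

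The hard part will be characteristic $2$: there are no orthogonal reflections, so every ``move'' above must be realised by Eichler (Siegel) transformations or, via \ref{Thyper2}, by exploiting the hyperbolic structure of the non-degenerate part of $U$; and one must keep verifying that transversality to $U^\perp$ is preserved at each reduction. That bookkeeping, rather than any single hard computation, is what upgrades the classical theorem to the version stated here.
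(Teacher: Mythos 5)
The paper offers no proof of \ref{Tkitaoka} at all: it is quoted from \cite{kitaoka1993} as a known (radical--transverse) form of Witt's extension theorem, so your sketch has to stand on its own, and as written it has genuine gaps exactly where the classical theorem is hard. First, a fixable slip: in odd characteristic $B(v,v')\ne 0$ does not make $P=\langle v,v'\rangle$ non-degenerate (the Gram determinant $B(v,v)B(v',v')-B(v,v')^2$ can vanish); you need the usual dichotomy of splitting off a non-degenerate line when some $B(v,v)\ne0$, the plane being automatic only when both diagonal entries vanish, as in characteristic $2$. More seriously, the step ``extend $\sigma|_P$ by moving $v$ via transitivity and then correcting $v'$ inside the stabiliser of $\sigma(v)$'' presupposes that the stabiliser of a vector acts transitively on vectors with prescribed norm and prescribed inner product against $\sigma(v)$. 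That statement \emph{is} the case $\dim V=2$ of the theorem, and your induction never establishes it: for $\dim V=2$ the ``inductive step'' reduces the problem to itself. This two-dimensional extension is precisely the content of Witt's theorem in characteristic $2$; Eichler transvections are named but never actually used to carry it out.

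The totally isotropic case $B|_V\equiv 0$ has two further problems. You invoke \ref{Thyperemb} for $U$, but its hypothesis is a non-degenerate ambient bilinear form, which is exactly what you may not assume here (one needs a radical-avoiding variant, which is easy but must be stated). Worse, the reduction to $H^\perp$ fails because $H\not\subseteq V$: after normalising $\sigma(v)=v$, preservation of $B$ on $V\times V$ only gives $B(\sigma(x),v)=0$ for $x\in V$, and says nothing about $B(\sigma(x),u)$ for the dual vector $u\notin V$, so $\sigma(V\cap H^\perp)$ need not lie in $H^\perp$ and the induction hypothesis does not apply (contrast the case $P\subseteq V$, where the same argument does give $\sigma(V\cap P^\perp)\subseteq P^\perp$). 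The standard repair is to first extend $\sigma$ to hyperbolic completions $V\oplus\widetilde V\to W\oplus\widetilde W$, reducing to subspaces on which $B$ is non-degenerate. Finally, your base case leans on Witt cancellation, which in most treatments is itself a corollary of the extension theorem (circularity risk), and on an unproved matching of $P_v$ with $P_w$ by an isometry sending $v$ to $w$ (in characteristic $2$ the partner vectors must be chosen with equal norms, which needs an argument); the clean route is the direct transvection $x\mapsto x+\frac{B(x,v+w)}{Q(v+w)}(v+w)$ when $Q(v+w)\ne0$, with an intermediate vector otherwise. In short, the skeleton is the right classical one, but the parts deferred to ``bookkeeping'' are the actual theorem.
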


\begin{proposition}
\label{Tisosurj}
Given a quadratic space $(U,Q)$, and assuming that for any non-zero $u\in U\cap U^\perp$ we have $Q(u)\ne 0$, then every isometry of $U$ fixes $U\cap U^\perp$. Furthermore, if given an embedding of $U$ into a minimal virtual quadratic space $(V,U)$, there is a natural map $\Iso(V,U)\to\Iso(U)$ that is surjective.
\end{proposition}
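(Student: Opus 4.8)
The plan is to prove the two assertions in order, the first feeding into the second. For the first, write $B$ for the bilinear form of $Q$ on $U$ and set $N:=U\cap U^\perp$, which is the radical of $B|_U$; the hypothesis says precisely that $Q$ is non-zero on $N\setminus\{0\}$. Any isometry $\varphi$ of $(U,Q)$ preserves $B$ (since $B(x,y)=Q(x+y)-Q(x)-Q(y)$) and hence maps $N$ onto $N$. To promote this to a pointwise statement, fix $u\in N$, put $w:=\varphi(u)-u\in N$, note $B(u,w)=0$ because $B$ vanishes on $N$, and expand $Q(u)=Q(\varphi(u))=Q(u+w)=Q(u)+Q(w)+B(u,w)=Q(u)+Q(w)$; hence $Q(w)=0$, so $w=0$. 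This is the only use of the non-vanishing hypothesis, and it is genuinely needed.

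For the second assertion, the natural map $\rho\colon\Iso(V,U)\to\Iso(U)$ is restriction to $U$: an element $\varphi\in\Iso(V,U)$ fixes $U^\perp$ pointwise and preserves $B$, hence stabilizes $(U^\perp)^\perp=U$ (using non-degeneracy of $B$ on $V$), so $\varphi|_U\in\Iso(U)$ is well defined and $\rho$ is a homomorphism. The content is surjectivity. Given $\sigma\in\Iso(U)$, regard $U$ as a subspace of the non-degenerate quadratic space $(V,Q)$ and $\sigma$ as an isometry between the subspace $U$ and itself; since $V^\perp=\{0\}$, the hypothesis $U\cap V^\perp=\{0\}$ of \ref{Tkitaoka} is vacuous, so $\sigma$ extends to an isometry $\tilde\sigma$ of $(V,Q)$.

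It remains to check that $\tilde\sigma$ already lies in $\Iso(V,U)$, i.e. fixes $U^\perp$ pointwise, and this is where minimality of $(V,U)$ enters. Since $\tilde\sigma$ preserves $B$ and $\tilde\sigma(U)=U$, also $\tilde\sigma(U^\perp)=U^\perp$; but minimality gives $U^\perp\subseteq U$, so $U^\perp=N$, and on $N$ the map $\tilde\sigma$ coincides with $\sigma$, which is the identity on $N$ by the first assertion. Hence $\tilde\sigma\in\Iso(V,U)$ and $\rho(\tilde\sigma)=\sigma$. I expect the obstacle to be conceptual rather than computational: the key is to recognize that the two standing hypotheses pull in opposite directions and together do all the work — the non-vanishing of $Q$ on $N$ forces every isometry of $U$ to fix $N$ pointwise, while minimality forces $U^\perp$ to equal $N$, so that a bare Witt-type extension from \ref{Tkitaoka} automatically restricts to the identity on $U^\perp$; beyond invoking \ref{Tkitaoka} the only care needed is bookkeeping of which orthogonal complement is taken in which space.
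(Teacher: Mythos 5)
Your proof is correct and follows essentially the same route as the paper: you use the non-vanishing of $Q$ on $N=U\cap U^\perp$ to force every isometry of $U$ to fix $N$ pointwise, and then obtain surjectivity by extending a given $\sigma\in\Iso(U)$ to $(V,Q)$ via \ref{Tkitaoka}, with minimality giving $U^\perp=N$. In fact you make explicit a detail the paper leaves implicit, namely the verification that the extended isometry fixes $U^\perp$ pointwise and therefore genuinely lies in $\Iso(V,U)$.
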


\begin{proof}
We will first show that $Q$ as a map is an injection from $N:=U\cap U^\perp$ to the base field. In fact, given $u$, $v\in N$, $Q(u-v)=Q(u)-Q(v)$ since $u\perp v$. Therefore if $Q(u)=Q(v)$, we get $Q(u-v)=0$, which is only possible if $u=v$. Since $Q$ is injective, and any isometry maps $N$ to itself, it must fix each vector, thus fixing $N$.

Given a virtual quadratic space $(V,U)$, any isometry of $V$ fixes $N\subseteq U^\perp$. If $(V,U)$ is minimal, $N^\perp=U$, hence the subspace $U$ is preserved, and there is a restriction map $\Iso(V,U)\to\Iso(U)$. Since $(Q,V)$ is a non-degenerate quadratic space, by \ref{Tkitaoka}, any isometry of $U$ extends to an isometry of $V$. Hence the restriction map is surjective.
\end{proof}

\begin{proposition}
\label{Tradical}
The condition that for any non-zero $u\in U\cap U^\perp$, $Q(u)\ne 0$, is equivalent to $\Rad Q=\{0\}$. If the characteristic is not $2$, this is equivalent to $U\cap U^\perp=\{0\}$.
\end{proposition}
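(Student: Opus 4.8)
The plan is to reduce everything to the identity $Q(v+u) = Q(v) + Q(u) + B(v,u)$ and then read off both equivalences directly; the computations are routine, so the only thing demanding care is keeping track of the fact that $\Rad Q$ here means the radical of the restriction of $Q$ to $U$, and how it sits inside $U^\perp$, the orthogonal complement taken within $U$.

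First I would characterise $\Rad Q$. By definition $v \in \Rad Q$ means $Q(v+u) = Q(u)$ for every $u \in U$; expanding the left-hand side, this is $Q(v) + B(v,u) = 0$ for all $u \in U$. Taking $u = 0$ forces $Q(v) = 0$, and what then remains is $B(v,u) = 0$ for all $u \in U$, i.e.\ $v \in U^\perp$. Since $v \in U$ to begin with, this yields
\[
\Rad Q = \{\, v \in U \cap U^\perp : Q(v) = 0 \,\}.
\]
The first equivalence is then immediate: $\Rad Q = \{0\}$ says exactly that the only vector of $U \cap U^\perp$ on which $Q$ vanishes is $0$, which is the displayed hypothesis.

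Next the characteristic $\ne 2$ case. One direction needs nothing, since $\Rad Q \subseteq U \cap U^\perp$ always, so $U \cap U^\perp = \{0\}$ forces $\Rad Q = \{0\}$. For the converse, assume $\charop\bb K \ne 2$ and $\Rad Q = \{0\}$, and pick $u \in U \cap U^\perp$. Then $u \perp u$, so $0 = B(u,u) = Q(2u) - 2Q(u) = 2Q(u)$; invertibility of $2$ gives $Q(u) = 0$, hence $u \in \Rad Q = \{0\}$ by the description above. Thus $U \cap U^\perp = \{0\}$.

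The one point worth flagging is that this last step genuinely uses $B(u,u) = 2Q(u)$ and so collapses in characteristic $2$ — which is precisely the phenomenon the paper is built around; there one can have $U \cap U^\perp$ of positive dimension while $\Rad Q = \{0\}$ (for instance a line $\langle u\rangle$ with $Q(u) \ne 0$, which is always contained in its own orthogonal complement). So I do not expect a real obstacle, only the need to isolate the characteristic-$2$ failure cleanly.
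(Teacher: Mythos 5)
Your proof is correct and follows essentially the same route as the paper: both establish $\Rad Q=\{u\in U\cap U^\perp\mid Q(u)=0\}$ by expanding $Q(v+u)$, and both handle the characteristic $\ne 2$ case via $2Q(u)=B(u,u)=0$ on $U\cap U^\perp$ (the paper phrases this as ``$B$ defines $Q$''). Nothing to add.
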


\begin{proof}
In fact we shall prove that $\Rad Q=\{u\in U\cap U^\perp\mid Q(u)=0\}$. In one direction, if $u\in\Rad Q$, then $B(u,v)=Q(u+v)-Q(u)-Q(v)=0$ and $Q(u)=Q(u+0)=Q(0)=0$. Now assume $Q(u)=0$ and $B(u,v)=0$ for all $v\in U$. Then $Q(u+v)=Q(u)+Q(v)+B(u,v)=Q(v)$, hence $u\in\Rad Q$.

If the characteristic is not $2$, the bilinear form $B$ defines $Q$, and so $Q|_{U\cap U^\perp}\equiv 0$ since $B|_{U\cap U^\perp}\equiv 0$.
\end{proof}

Propositions \ref{Tembed}, \ref{Tminimal}, \ref{Tisounique}, \ref{Tisosurj} and \ref{Tradical} may be combined into the following theorem:

\begin{theorem}\label{Tmain}
Consider a quadratic space $(Q_U,U)$. Then it can be embedded into a virtual quadratic space $(V,Q_V,U)$, and for such an embedding, $\Iso(V,U)$ depends only on $Q_U$. In fact, such a $V$ can always be chosen so that $\dim V=\dim U+\dim(U\cap U^\perp)$, even as a subspace of some other virtual quadratic space $(V',Q_{V'},U)$, which is equivalent to the condition that $U^\perp\subseteq U$ in $V$. Furthermore, if $\Rad Q_U=\{0\}$, the restriction map $\Iso(V,U)\to\Iso(U)$ is a surjective map.
\end{theorem}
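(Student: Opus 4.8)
The plan is to assemble the five propositions that precede the theorem; no new argument is needed, only careful matching of notation and hypotheses. First I would obtain the embedding: Proposition \ref{Tembed} gives a virtual quadratic space $(V,Q_V,U)$ containing the given $(U,Q_U)$, which settles the existence claim. For the assertion that $\Iso(V,U)$ depends only on $Q_U$, I would invoke Proposition \ref{Tisounique}: if $(V,Q_V,U)$ and $(V',Q_{V'},U')$ are two virtual quadratic spaces with $(U,Q_U)\cong(U',Q_{V'}|_{U'})$, there is a canonical bijection between $\Iso(V,U)$ and $\Iso(V',U')$, so the isometry group is an invariant of $Q_U$.

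For the dimension statement I would use Proposition \ref{Tminimal}. Starting from any embedding $(V',Q_{V'},U)$, that proposition produces a subspace $U\le V_m\le V'$ with $B|_{V_m}$ non-degenerate, $U^\perp\cap V_m\subseteq U$, and $\Iso(V',U)=\Iso(V_m,U)$; it also records that $\dim V_m=\dim U+\dim(U\cap U^\perp)$ and that minimality is exactly the condition $U^\perp\subseteq U$ computed inside $V_m$. Thus any $V$ can be replaced by such a $V_m$ without changing the isometry group, which gives the clause ``even as a subspace of some other virtual quadratic space''.

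For the last claim I would first translate the hypothesis: by Proposition \ref{Tradical}, $\Rad Q_U=\{0\}$ is equivalent to $Q(u)\ne 0$ for every non-zero $u\in U\cap U^\perp$, which is precisely the standing assumption of Proposition \ref{Tisosurj}. Applying that proposition to a minimal embedding $(V_m,U)$ yields a natural restriction map $\Iso(V_m,U)\to\Iso(U)$ that is surjective (its surjectivity ultimately resting on the extension Theorem \ref{Tkitaoka}). Combining this with the identification $\Iso(V,U)=\Iso(V_m,U)$ of Proposition \ref{Tminimal} and the invariance supplied by Proposition \ref{Tisounique}, the restriction map $\Iso(V,U)\to\Iso(U)$ is surjective for every embedding.

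The only real obstacle is bookkeeping rather than mathematics: Proposition \ref{Tisosurj} is phrased for a \emph{minimal} virtual quadratic space, and Proposition \ref{Tradical} rephrases the numerical hypothesis in terms of $\Rad Q_U$, so in the write-up one must be explicit about passing to the minimal subspace $V_m$ first, checking that its isometry group and its restriction map to $\Iso(U)$ are the ones referred to in the theorem's statement, and that the equivalence ``minimal $\iff U^\perp\subseteq U$'' is interpreted inside the correct ambient space.
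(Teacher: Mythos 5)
Your proposal matches the paper exactly: the theorem is stated there as the combination of Propositions \ref{Tembed}, \ref{Tminimal}, \ref{Tisounique}, \ref{Tisosurj} and \ref{Tradical}, with no further argument given, and you assemble those same propositions in the same way, including the correct translation of $\Rad Q_U=\{0\}$ via \ref{Tradical} and the passage to a minimal subspace via \ref{Tminimal}. Nothing is missing.
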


Proposition \ref{Tisosurj} motivates also the following definition.

\begin{definition}
A virtual quadratic space $(V,U)$ is {\bf non-degenerate} if $\Rad Q=\{0\}$.
\end{definition}

\section{Finite fields}

One interesting application of virtual quadratic spaces is that they provide a common language for finite fields of even and odd characteristic. First, consider the following lemma.

\begin{lemma}
\label{Tvirtual2}
In a prefect field ${\bb K}$ of characteristic $2$, any non-degenerate virtual quadratic space $(V,U)$ is such that $\dim(U\cap U^\perp)\le 1$.
\end{lemma}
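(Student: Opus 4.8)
The plan is to study the restriction of $Q$ to $N := U\cap U^\perp$. Since $N\subseteq U^\perp$, the associated bilinear form $B$ vanishes identically on $N$, so for $u,v\in N$ we have $Q(u+v)=Q(u)+Q(v)+B(u,v)=Q(u)+Q(v)$; combined with $Q(\lambda u)=\lambda^2 Q(u)$, this exhibits $Q|_N\colon N\to{\bb K}$ as an additive map that is semilinear with respect to the Frobenius $\lambda\mapsto\lambda^2$. Because $(V,U)$ is non-degenerate, Proposition~\ref{Tradical} gives $\{u\in N\mid Q(u)=0\}=\Rad Q=\{0\}$, so $Q|_N$ has trivial kernel; in particular $Q(u)\ne 0$ for every non-zero $u\in N$.

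Next I would argue by contradiction. Suppose $\dim N\ge 2$ and pick linearly independent $u,v\in N$. Then $Q(u),Q(v)\ne 0$, so $Q(v)/Q(u)$ is a well-defined non-zero element of ${\bb K}$, and since ${\bb K}$ is perfect of characteristic $2$ it has a square root $c\in{\bb K}$. Put $w:=cu+v$. Then $w\ne 0$ by linear independence, while, using $B(cu,v)=0$,
$Q(w)=c^2Q(u)+Q(v)=Q(v)+Q(v)=0$,
contradicting $\Rad Q=\{0\}$. Hence $\dim N=\dim(U\cap U^\perp)\le 1$.

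I do not expect a real obstacle here; the single place perfectness is used is in extracting the square root of $Q(v)/Q(u)$. Equivalently, one can phrase the whole thing structurally: $Q|_N$ is an injective ${\bb K}$-linear map from $N$ into ${\bb K}$ equipped with the twisted action $\lambda\cdot x=\lambda^2 x$, which is a one-dimensional ${\bb K}$-vector space exactly because squaring is surjective on a perfect field, so $\dim N\le 1$. The only point to be careful about is that $N=U\cap U^\perp$ equals the radical of $B|_U$ and is therefore intrinsic to $(U,Q|_U)$, independent of the ambient $V$; thus the hypothesis that $(V,U)$ is a non-degenerate virtual quadratic space is used only through $\Rad(Q|_U)=\{0\}$.
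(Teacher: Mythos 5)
Your argument is correct and coincides with the paper's own proof: both take two independent vectors in $U\cap U^\perp$, use perfectness to extract a square root of the ratio of their (nonzero) $Q$-values, and produce a nonzero isotropic vector in $U\cap U^\perp$, contradicting $\Rad Q=\{0\}$. Your added remark identifying $Q|_N$ as an injective Frobenius-semilinear map into the one-dimensional twisted space ${\bb K}$ is a nice structural restatement, but the substance is the same.
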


\begin{proof}
Assume that there are two linearly independent vectors $u$, $v\in U\cap U^\perp$ with $Q(u)\ne 0\ne Q(v)$. Since ${\bb K}$ is perfect, there is an element $\lambda\in{\bb K}$ such that $\lambda^2={Q(u)\over Q(v)}$. Then the vector $w:=u+\lambda v$ has $Q(w)=0$. Since $(V,U)$ is non-degenerate, this contradicts the fact that $u$ and $v$ are linearly independent.
\end{proof}

Now let us recall a few simple theorems. See \cite{kitaoka1993} for details.

\begin{theorem}
\label{Tquadform}
Let us fix a finite field ${\bb F}$ of odd characteristic, and choose a non-square element ${\bf e}\in{\bb F}$. Then every non-degenerate quadratic form is of one of the following forms, up to isomorphism:\hfill\break
$\bullet$ $\sum_{i=1}^k x_{2i-1}x_{2i}$ for $n=2k$;\hfill\break
$\bullet$ $\sum_{i=1}^k x_{2i-1}x_{2i}+x_{2k+1}^2$ for $n=2k+1$;\hfill\break
$\bullet$ $\sum_{i=1}^k x_{2i-1}x_{2i}+x_{2k+1}^2-{\bf e}x_{2k+2}^2$ for $n=2k+2$.
\end{theorem}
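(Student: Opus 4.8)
The strategy is the classical one: diagonalize, rescale the diagonal entries into the two square classes of ${\bb F}^\times$, and then collapse the form by a pigeonhole argument. For the first step I would recall that over a field of characteristic $\ne 2$ every quadratic form admits an orthogonal basis: if $Q$ is non-degenerate with $\dim\ge 1$ then $Q\not\equiv 0$ (otherwise the associated bilinear form would vanish), so some $v$ has $Q(v)\ne 0$, and then $V=\langle v\rangle\oplus\langle v\rangle^\perp$ with $Q|_{\langle v\rangle^\perp}$ again non-degenerate; now induct on dimension. Writing $\langle a_1,\dots,a_n\rangle$ for the form $\sum_i a_ix_i^2$, this gives $Q\cong\langle a_1,\dots,a_n\rangle$ with all $a_i\ne 0$. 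Since ${\bb F}^\times/({\bb F}^\times)^2=\{1,{\bf e}\}$ and the substitution $x_i\mapsto t_ix_i$ replaces $a_i$ by $t_i^2a_i$, I may assume $a_i\in\{1,{\bf e}\}$ for every $i$.

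The key input is a counting lemma: with $q=|{\bb F}|$, any non-degenerate binary form $\langle a,b\rangle$ represents every element $c\in{\bb F}$, because the sets $\{ax^2:x\in{\bb F}\}$ and $\{c-by^2:y\in{\bb F}\}$ each have $(q+1)/2$ elements and therefore intersect. From this I would extract two facts. First, if $c\ne 0$ is a value of $\langle a,b\rangle$, then splitting off a vector of norm $c$ gives $\langle a,b\rangle\cong\langle c,d\rangle$ for some $d$, and comparing determinants forces $cd\equiv ab$ modulo squares; hence $\langle a,b\rangle\cong\langle c,d\rangle$ whenever $cd\equiv ab$, in particular $\langle a,b\rangle\cong\langle 1,ab\rangle$ and $\langle{\bf e},{\bf e}\rangle\cong\langle 1,1\rangle$. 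Second, every ternary form is isotropic (since $\langle a,b\rangle$ represents $-c$), so every form of dimension $\ge 3$ is isotropic and an anisotropic form over ${\bb F}$ has dimension at most $2$.

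Now I would put $Q$ into the listed shape by splitting off hyperbolic planes. A non-degenerate form of dimension $\ge 2$ represents $1$ (apply the lemma to a binary orthogonal summand), so $Q\cong\langle 1\rangle\oplus Q'$; if $\dim Q\ge 3$ then $Q'$ is non-degenerate of dimension $\ge 2$, hence represents $-1$, so $Q\cong\langle 1,-1\rangle\oplus Q''$ with $\langle 1,-1\rangle\cong x_1x_2$ a hyperbolic plane (invertible change of variables, char $\ne 2$). Iterating, I obtain $Q\cong\bigl(\sum_{i=1}^{k}x_{2i-1}x_{2i}\bigr)\oplus Q_0$ with $\dim Q_0\le 2$. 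For $n$ even this leaves a binary $Q_0$: if $Q_0$ is isotropic it is itself a hyperbolic plane and is absorbed into the sum, giving the first form; otherwise $Q_0$ is anisotropic, so by the lemma $Q_0\cong\langle 1,d\rangle$, and anisotropy forces $-d$ to be a non-square, i.e.\ $d\equiv-{\bf e}$, whence $Q_0\cong\langle 1,-{\bf e}\rangle=x^2-{\bf e}y^2$ and $Q$ is the third form. For $n$ odd it leaves $Q_0\cong\langle c\rangle$ with $c\in\{1,{\bf e}\}$, and the representative $c=1$ yields the second form.

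There is no single hard step here; the only ingredient that really uses finiteness of the field is the counting lemma, and the technical point to get right in the write-up is that every orthogonal complement appearing in the induction remains non-degenerate — this is Witt's cancellation theorem in characteristic $\ne 2$, or, more economically, the decomposition $V=\langle v\rangle\oplus\langle v\rangle^\perp$ at a non-isotropic $v$. One subtlety worth flagging: in odd dimension the forms $\sum x_{2i-1}x_{2i}+x_{2k+1}^2$ and $\sum x_{2i-1}x_{2i}+{\bf e}x_{2k+1}^2$ have different discriminants and so are not isometric, so the statement must be read as listing one representative per similarity class — multiplying the whole form by ${\bf e}$ fixes each hyperbolic plane up to isometry and interchanges the two odd-dimensional classes, which is exactly why a single representative is enough for the intended enumeration of orthogonal groups.
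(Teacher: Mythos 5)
Your proof is correct, and there is little in the paper to compare it against: Theorem \ref{Tquadform} is recalled as a known fact with only a citation to Kitaoka, so your write-up supplies the argument the paper outsources. What you give is the standard proof --- diagonalize in characteristic $\ne 2$, rescale the diagonal entries into $\{1,{\bf e}\}$, invoke the counting lemma that a non-degenerate binary form over a finite field represents every element (whence binary forms are classified by their discriminant, and any non-degenerate form of dimension $\ge 3$ splits off $\langle 1,-1\rangle\cong x_1x_2$), and finish by inspecting the residual form of dimension $\le 2$ --- and each step is sound. Your closing caveat is also the one genuinely delicate point, and you resolve it the right way: read as a classification up to isometry, the odd-dimensional list is incomplete, since $\sum_{i}x_{2i-1}x_{2i}+x_{2k+1}^2$ and $\sum_{i}x_{2i-1}x_{2i}+{\bf e}x_{2k+1}^2$ have different discriminants and are not isometric; your reading of the statement as giving one representative per similarity class --- justified because multiplying by ${\bf e}$ preserves each hyperbolic plane up to isometry and interchanges the two odd-dimensional isometry classes --- is what makes the statement true, and it is all that is needed downstream, since rescaling a form does not change its group of isometries and hence does not affect the orders computed in the final section. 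That imprecision sits in the paper's statement (and resurfaces in the corollary counting a single class in odd dimension), not in your argument.
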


\begin{theorem}
\label{Tquadform2}
Let us fix a finite field ${\bb F}$ of characteristic $2$, and choose an element ${\bf e}\in{\bb F}$ for which the polynomial $x^2+x+{\bf e}$ has no roots. Then every quadratic form with trivial radical is of one of the following forms, up to isomorphism:\hfill\break
$\bullet$ $\sum_{i=1}^k x_{2i-1}x_{2i}$ for $n=2k$;\hfill\break
$\bullet$ $\sum_{i=1}^k x_{2i-1}x_{2i}+x_{2k+1}^2$ for $n=2k+1$;\hfill\break
$\bullet$ $\sum_{i=1}^k x_{2i-1}x_{2i}+x_{2k+1}^2+x_{2k+1}x_{2k+2}+{\bf e}x_{2k+2}^2$ for $n=2k+2$.\hfill\break
The first and last have non-degenerate associated bilinear forms, but the second has not.
\end{theorem}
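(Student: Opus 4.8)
The plan is to split $(V,Q)$, via the standard orthogonal decomposition recalled in the proof of \ref{Thyper2}, into the radical $R:=\Rad B$ of the associated bilinear form---on which $Q$ restricts to an additive map with $Q(\lambda r)=\lambda^2 Q(r)$---and a $B$-non-degenerate complement, then to classify each piece and match dimensions by parity.

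For the radical piece: if $V=R\oplus W$ as vector spaces, then every vector of $R$ is $B$-orthogonal to all of $V$, so $Q(r+w)=Q(r)+Q(w)$ and this decomposition is automatically orthogonal for $Q$, with $B|_W$ non-degenerate. On $R$ the map $Q|_R\colon R\to{\bb K}$ is additive, and, exactly as in the proof of \ref{Tradical}, $\Rad Q=\{r\in R:Q(r)=0\}$ is its kernel. Hence $\Rad Q=\{0\}$ forces $Q|_R$ to be injective, so $|R|\le|{\bb K}|$ and $\dim R\le 1$; and a non-degenerate alternating form only exists in even dimension (it makes the space hyperbolic, cf.\ \ref{Thyper2}), so $\dim R\equiv\dim V\pmod 2$. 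Thus $\dim R=0$ when $n$ is even and $\dim R=1$ when $n$ is odd, in the latter case with a generator $r$ satisfying $Q(r)\ne 0$, which we rescale (using that ${\bb K}$ is perfect) to $Q(r)=1$, producing the displayed $x_{2k+1}^2$ term.

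For the $B$-non-degenerate piece $W$ (even dimension $2m$) I would induct on $m$. A nonzero quadratic form in $\ge 3$ variables over a finite field has a nontrivial zero $w_0$ (Chevalley--Warning); since $B|_W$ is non-degenerate there is $w_1$ with $B(w_0,w_1)=1$, and replacing $w_1$ by $w_1+Q(w_1)w_0$ makes it isotropic too, so $\langle w_0,w_1\rangle$ carries $Q=x_1x_2$ and splits off as a $B$-orthogonal (hence $Q$-orthogonal) summand whose $B$-complement again has non-degenerate $B$. In dimension $2$ the remaining form is either a further plane $x_1x_2$ or an anisotropic plane, which over a finite field is unique and equals $x^2+xy+{\bf e}y^2$ with $x^2+x+{\bf e}$ irreducible. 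Moreover anisotropic $\perp$ anisotropic $\cong$ hyperbolic $\perp$ hyperbolic: the corresponding $4$-variable form has a nontrivial zero, split off a plane $x_1x_2$ as above, and the leftover plane cannot be anisotropic---else the Arf invariant (additive under orthogonal sum and distinguishing the two planes; equivalently, the count of zeros of $Q$) would be contradicted. So we may reduce the number of anisotropic planes to $0$ or $1$, giving exactly two forms on $W$, namely $\sum_{i=1}^m x_{2i-1}x_{2i}$ and $\sum_{i=1}^{m-1}x_{2i-1}x_{2i}+x_{2m-1}^2+x_{2m-1}x_{2m}+{\bf e}x_{2m}^2$, which are non-isomorphic, again by the Arf invariant / zero count.

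Assembling: for $n=2k$ even, $W=V$ and we obtain precisely the first and the last forms of the list, both with non-degenerate (symplectic) associated bilinear form. For $n=2k+1$ odd we obtain $x_{2k+1}^2$ orthogonally plus one of the two forms on $W$; the main point, and the step I expect to be the real obstacle, is that these two possibilities coincide in odd dimension---adjoining a square ``kills'' the Arf invariant. To see this I would take the $3$-dimensional space $\langle x^2+xy+{\bf e}y^2\rangle\perp\langle z^2\rangle$: it has a nontrivial zero $w_0$ by Chevalley--Warning, and $w_0$ is not in the $B$-radical $\langle z\rangle$ (where $Q$ is nonzero), so $B(w_0,\cdot)\ne 0$ and a plane $x_1x_2$ splits off; the leftover is a $1$-dimensional $B$-radical on which $Q$ is nonzero, hence rescales to $z^2$, so $x^2+xy+{\bf e}y^2+z^2\cong x_1x_2+z^2$. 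Therefore the odd-dimensional form is the single form $\sum_{i=1}^k x_{2i-1}x_{2i}+x_{2k+1}^2$, whose bilinear form has the $1$-dimensional radical $\langle e_{2k+1}\rangle$ and is thus degenerate, as claimed. Throughout, the one thing to watch carefully is that every orthogonal complement is taken with respect to $B$, not $Q$, so that non-degeneracy of $B$---and hence triviality of the radical on each summand---is preserved along the induction.
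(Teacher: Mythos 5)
Your proposal is correct, but it is doing something the paper never does: for Theorem~\ref{Tquadform2} the paper offers no proof at all, merely recalling the classification as a known result and citing Kitaoka. Your argument is the standard self-contained route: split off the radical $R$ of the associated bilinear form (using $\Rad Q=\{u\in R: Q(u)=0\}$ as in \ref{Tradical} plus a cardinality count to get $\dim R\le 1$, and evenness of a non-degenerate alternating space as in \ref{Thyper2} to fix the parity), peel hyperbolic planes off the non-degenerate part via Chevalley--Warning together with the substitution $w_1\mapsto w_1+Q(w_1)w_0$, reduce to at most one anisotropic plane, and then prove the key odd-dimensional identity $\langle x^2+xy+\mathbf{e}y^2\rangle\perp\langle z^2\rangle\cong\langle x_1x_2\rangle\perp\langle z^2\rangle$, which is exactly why only one isomorphism class survives in odd dimension; you also correctly identify the $1$-dimensional radical $\langle e_{2k+1}\rangle$ for the degeneracy claim. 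What your approach buys is an elementary, characteristic-$2$-specific proof in place of a citation, and it makes transparent the phenomenon the paper later exploits (adjoining a square term erases the $\pm$ type distinction). Two steps are asserted rather than proved and should be flagged as such: the uniqueness of the anisotropic binary form $x^2+xy+\mathbf{e}y^2$ (a short normalization using perfectness of the field, or the identification with the norm form of the quadratic extension), and the invariant used to rule out $A\perp A\cong H\perp A$ --- here prefer your zero-count formulation over the Arf invariant, since additivity of Arf is morally what is being established; the counts ($q^3+q^2-q$ versus $q^3-q^2+q$ isotropic vectors in dimension $4$) follow from the elementary fact that an anisotropic plane represents every nonzero scalar exactly $q+1$ times because squaring is a bijection. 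With those two routine computations filled in, the proof is complete.
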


The first and last cases are denoted for all fields as $+$-type and $-$-type, respectively. These two theorems can be combined into the following corollary:

\begin{corollary}
Let us fix a finite field ${\bb F}$. Then the number of non-degenerate virtual quadratic spaces of dimension $n$ up to isomorphisms is $2$ if $n$ is even and $1$ if $n$ is odd.
\end{corollary}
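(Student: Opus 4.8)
The plan is to turn the count of non-degenerate virtual quadratic spaces into a count of ordinary quadratic spaces with trivial radical, and then quote the classification Theorems \ref{Tquadform} and \ref{Tquadform2}.

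First I would set up the dictionary. By the definition of isomorphism of virtual quadratic spaces, the assignment $(V,Q,U)\mapsto(U,Q|_U)$ is a bijection on isomorphism classes between virtual quadratic spaces and the quadratic spaces occurring as their $U$-part; by Proposition \ref{Tembed} every quadratic space occurs this way, and $\dim U$ is the dimension on both sides. Moreover $\Rad(Q|_U)$ is intrinsic to the quadratic form $Q|_U$ and does not involve the ambient space $V$, so by Proposition \ref{Tradical} (together with the definition of a non-degenerate virtual quadratic space) the space $(V,Q,U)$ is non-degenerate if and only if $\Rad(Q|_U)=\{0\}$. Hence the isomorphism classes of non-degenerate virtual quadratic spaces of dimension $n$ biject with the isomorphism classes of quadratic spaces of dimension $n$ having trivial radical, and it suffices to count the latter.

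Then I would split into the two characteristics. If $\charop{\bb F}$ is odd, Proposition \ref{Tradical} says that trivial radical is the same as non-degeneracy of the form, and Theorem \ref{Tquadform} lists the non-degenerate forms: two in each even dimension (the split form $\sum_{i=1}^{k}x_{2i-1}x_{2i}$, and the form carrying the anisotropic binary summand $x^2-{\bf e}y^2$) and exactly one in each odd dimension. If $\charop{\bb F}=2$, Theorem \ref{Tquadform2} lists the forms with trivial radical outright: two in each even dimension (the $+$-type $\sum x_{2i-1}x_{2i}$ and the $-$-type $\sum x_{2i-1}x_{2i}+x^2+xy+{\bf e}y^2$) and exactly one in each odd dimension, the latter with $\dim(U\cap U^\perp)=1$, in agreement with Lemma \ref{Tvirtual2}. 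Reading off these lists gives $2$ classes for even $n$ and $1$ for odd $n$ in both characteristics, provided the two forms listed in each even dimension are genuinely non-isomorphic.

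That last point is the only place real content enters; the rest is bookkeeping. It is the classical fact that the split and non-split forms ($+$-type and $-$-type) are not isometric — separated by the Witt index, equivalently by the discriminant when $\charop{\bb F}$ is odd and by the Arf invariant when $\charop{\bb F}=2$ — for which I would cite \cite{kitaoka1993}. (The empty case $n=0$ is excluded; there the count is $1$.) So the main obstacle is essentially just being explicit about which invariant distinguishes the two even-dimensional types; the reduction in the first two paragraphs is purely formal.
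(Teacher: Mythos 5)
Your proposal is correct, and its counting step is the same as the paper's (quote Theorems \ref{Tquadform} and \ref{Tquadform2} for forms with trivial radical), but you reach that step by a noticeably more direct route. The paper first passes to a minimal model via Proposition \ref{Tminimal} and uses Lemma \ref{Tvirtual2} together with the parity constraint from Theorem \ref{Tquadform2} to pin down the ambient space ($V=U$ in odd characteristic and in the even-dimensional characteristic-$2$ case, $\dim V=\dim U+1$ in the odd-dimensional characteristic-$2$ case); this buys a concrete description of what the ambient $(V,Q)$ looks like, which is what the subsequent order computation for $\Iso(V,U)$ actually exploits, but for the mere count it is not needed. You instead note that the paper's notion of isomorphism of virtual quadratic spaces is by definition isomorphism of the $U$-parts, that non-degeneracy is intrinsic to $Q|_U$ by Proposition \ref{Tradical}, and that every quadratic space occurs by Proposition \ref{Tembed}, so the reduction to counting quadratic spaces with trivial radical is purely formal and Lemma \ref{Tvirtual2} never enters. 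You also make explicit a point the paper leaves implicit: that the two even-dimensional types listed in each classification theorem are genuinely non-isometric (Witt index, discriminant, Arf invariant), which is needed to get the count $2$ rather than at most $2$; citing \cite{kitaoka1993} for this is appropriate.
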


\begin{proof}
We may assume that the virtual quadratic space is minimal. A non-degenerate virtual quadratic space is a triple $(V,Q,U)$ such that $Q(u)=0$ for $u\in U\cap U^\perp$ only if $u=0$. If the characteristic of the field is odd, this is only possible if $V=U$, hence this is the same case as theorem \ref{Tquadform}. If the characteristic is $2$, $V$ must be of even dimension by theorem \ref{Tquadform2}. Since all finite fields are perfect, by \ref{Tvirtual2} we have $\dim V-\dim U\le 1$ since $V$ is minimal. Hence if $\dim U$ is even, then $V=U$.

Assume that $\dim U$ is odd and $\dim V=\dim U+1$. Then $\Rad Q|_U=\{0\}$, since the virtual quadratic space is non-degenerate. Hence $Q|_U$ is of the form prescribed in \ref{Tquadform2}, which determines the virtual quadratic space up to isomorphism.
\end{proof}

The order of orthogonal groups over finite fields is known (see \cite{huppert1967}).

\begin{theorem}
Consider a quadratic space $(U,Q)$ with $\Rad Q=\{0\}$ over the finite field ${\bb F}_q$, and let us denote by ${\rm O}^\varepsilon(2k,q)$ the group $\Iso(U)$ when $\dim U=2k$ and $Q$ is of type $\varepsilon$, and by ${\rm O}(2k+1,q)$ the group $\Iso(U)$.
Then
$${\rm O}^\varepsilon(2k,q)=2q^{k^2-k}(q^{k}-\varepsilon)\prod_{i=1}^{k-1}(q^{2i}-1)$$
If $2\nmid q$,
$${\rm O}(2k+1,q)=2q^{k^2}\prod_{i=1}^{k}(q^{2i}-1)$$
If $2\mid q$,
$${\rm O}(2k+1,q)=q^{k^2}\prod_{i=1}^{k}(q^{2i}-1)$$
\end{theorem}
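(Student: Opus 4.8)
The plan is to prove the three formulas simultaneously by downward induction on $n=\dim U$, descending by $2$ at each step via an orbit--stabilizer count for the action of $\Iso(U)$ on the set $\mathcal P$ of \emph{hyperbolic pairs} $(u,w)$ --- meaning $Q(u)=Q(w)=0$ and $B(u,w)=1$ --- each of which spans a hyperbolic plane $H=\langle u,w\rangle$ with $U=H\perp H^\perp$. Concretely, I will establish
\begin{align*}
|\Iso(U)| &= |\Iso(H^\perp)|\cdot|\mathcal P|,\\
|\mathcal P| &= q^{\,n-2}\cdot\bigl(\text{number of nonzero isotropic vectors of }U\bigr),
\end{align*}
which specialize to $|{\rm O}^\varepsilon(2k,q)|=|{\rm O}^\varepsilon(2k-2,q)|\,q^{2k-2}(q^{k-1}+\varepsilon)(q^k-\varepsilon)$ and $|{\rm O}(2k+1,q)|=|{\rm O}(2k-1,q)|\,q^{2k-1}(q^{2k}-1)$, bottoming out at $n\le 2$. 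The exponent bookkeeping $\sum_{j=2}^k(2j-2)=k^2-k$ and $\sum_{j=1}^k(2j-1)=k^2$, together with telescoping of the remaining factors, then reproduces the stated products; note that the even/odd-characteristic dichotomy in the odd-dimensional formula enters \emph{only} through the base value $|{\rm O}(1,q)|$.

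For the orbit--stabilizer step I need three ingredients. First, nonemptiness: a non-degenerate quadratic form in dimension $\ge 3$, and the type-$+$ form in dimension $2$, is isotropic, so $\mathcal P\ne\emptyset$; the anisotropic case $n=2$, $\varepsilon=-$ is handled directly as a base case. Second, transitivity: given $(u,w),(u',w')\in\mathcal P$, the assignment $u\mapsto u'$, $w\mapsto w'$ is an isometry between the hyperbolic planes $H=\langle u,w\rangle$ and $H'=\langle u',w'\rangle$; since $B|_H$ is non-degenerate, $H$ meets $U\cap U^\perp$ trivially --- the only delicate case is characteristic $2$ with $n$ odd, where $U\cap U^\perp$ is a line $\langle r\rangle$ with $Q(r)\ne0$ and hence cannot lie in the isotropic plane $H$ --- so \ref{Tkitaoka} extends the map to all of $U$. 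Third, the stabilizer: because $B|_H$ is non-degenerate we get $U=H\perp H^\perp$ with $\dim H^\perp=n-2$, and $Q|_{H^\perp}$ is again non-degenerate, has $\Rad Q|_{H^\perp}=\{0\}$, and for even $n$ is of the same type $\varepsilon$ (the hyperbolic plane being of type $+$); the pointwise stabilizer of $(u,w)$ is exactly $\Iso(H^\perp)$. These give $|\Iso(U)|=|\Iso(H^\perp)|\cdot|\mathcal P|$.

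To count $|\mathcal P|$ I factor it as (number of admissible $u$) times (number of $w$ with $Q(w)=0$ and $B(u,w)=1$ for a fixed such $u$). The first factor is the number of nonzero isotropic vectors, namely $q^{2k}-1$ for $n=2k+1$ and $(q^{k-1}+\varepsilon)(q^k-\varepsilon)$ for $n=2k$ of type $\varepsilon$; these standard enumerations of $\{Q(x)=0\}$ I obtain by a short auxiliary induction from the normal forms of \ref{Tquadform} and \ref{Tquadform2}, valid uniformly in the characteristic by surjectivity of Frobenius, or simply quote them. For fixed nonzero isotropic $u$ the set $\{w:B(u,w)=1\}$ is a coset $w_0+u^\perp$; writing $u^\perp=\langle u\rangle\oplus Y$ and expanding $Q(w_0+su+y)=Q(w_0)+Q(y)+s+B(w_0,y)$ (using $Q(u)=0$, $B(u,Y)=0$, $B(u,w_0)=1$) shows that each $y\in Y$ admits exactly one scalar $s$ with $w_0+su+y$ isotropic, hence exactly $q^{\dim Y}=q^{\,n-2}$ such $w$. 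This produces $|\mathcal P|$ and the recursions. For the base cases: $|{\rm O}^+(2,q)|=2(q-1)$ (diagonal scalings and the coordinate swap of the hyperbolic plane), $|{\rm O}^-(2,q)|=2(q+1)$ (the dihedral symmetry group of an anisotropic binary form), and $|{\rm O}(1,q)|$ equals $2$ for odd $q$ (the group $\{\pm\mathrm{id}\}$) but $1$ for even $q$, since there $Q(x)=x^2$ and injectivity of $x\mapsto x^2$ forces an isometry to be the identity.

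I expect the main obstacle to be the uniform handling of characteristic $2$: verifying that \ref{Tkitaoka} still applies in odd dimension, where the associated bilinear form is degenerate, rests on the radical line not meeting any hyperbolic plane, which in turn uses $\Rad Q=\{0\}$; and one must also check that the descent to $H^\perp$ preserves both $\Rad Q=\{0\}$ and the type. The isotropic-vector counts are routine but should likewise be done without splitting on the characteristic. It is worth noting that the single genuine case split --- the value of $|{\rm O}(1,q)|$ --- is precisely what yields the factor $2$ discrepancy in the odd-dimensional formula, exactly the phenomenon that the virtual-quadratic-space framework of the paper is designed to absorb.
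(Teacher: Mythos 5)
The paper does not actually prove this theorem: it is quoted as a known result with a citation to Huppert's \emph{Endliche Gruppen I}. So your argument is not a variant of the paper's proof but a genuine self-contained substitute, and in outline it is correct: it is the classical orbit--stabilizer induction on hyperbolic pairs. Transitivity on pairs follows from the Witt extension statement the paper records as \ref{Tkitaoka}, whose hypothesis $H\cap U^\perp=\{0\}$ you correctly reduce to non-degeneracy of $B|_H$; this argument is characteristic-free, and your parenthetical calling $H$ an ``isotropic plane'' is only a misnomer ($H$ is hyperbolic, not totally isotropic --- the radical line with $Q(r)\ne 0$ is excluded by $H\cap H^\perp=\{0\}$, not by isotropy of $H$), with nothing resting on it. The stabilizer identification with $\Iso(H^\perp)$, the factorization of $|\mathcal P|$ as $q^{n-2}$ times the number of nonzero isotropic vectors (the coset computation with the unique scalar $s$ is valid in every characteristic, since $B(u,\cdot)\ne 0$ even when $B$ has a radical line), and the telescoping arithmetic all check out, and the base values $|{\rm O}^+(2,q)|=2(q-1)$, $|{\rm O}^-(2,q)|=2(q+1)$, $|{\rm O}(1,q)|=2$ or $1$ give exactly the stated formulas. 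What your route buys, compared with the bare citation, is a uniform treatment in which the even/odd-characteristic discrepancy is isolated in the one-dimensional base case --- which is precisely the phenomenon the paper's virtual-space machinery later absorbs via the order-$2$ kernel. The ingredients you quote rather than prove (the counts $(q^{k-1}+\varepsilon)(q^k-\varepsilon)$ and $q^{2k}-1$ of nonzero isotropic vectors, preservation of type when a hyperbolic plane is split off, i.e.\ Witt cancellation or inspection of \ref{Tquadform} and \ref{Tquadform2}, and the order of the anisotropic binary group) are standard and properly flagged, so I see no genuine gap.
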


The formula for even dimension does not discriminate between even and odd characteristics, but the formula for odd dimension does. Virtual quadratic spaces give us a hint that the problem is that the space has degenerate associated bilinear form, and as it turns out, it is:

\begin{theorem}
Let $(V,U)$ be a non-degenerate virtual quadratic space of dimension $2k+1$ over a field ${\bb F}_q$ of characteristic $2$. Then
$$|\Iso(V,U)|=2q^{k^2}\prod_{i=1}^{k}(q^{2i}-1)$$
\end{theorem}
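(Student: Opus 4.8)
The plan is to pass to a minimal virtual quadratic space, use the surjection $\Iso(V,U)\to\Iso(U)$ from Proposition \ref{Tisosurj}, determine the kernel of this map, and then plug in the known order of the odd-dimensional orthogonal group over ${\bb F}_q$ in characteristic $2$.

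First I would normalize the data. By Proposition \ref{Tminimal} we may replace $(V,U)$ by its minimal subspace without changing $\Iso(V,U)$, so assume $(V,U)$ is minimal; then $\dim V=\dim U+\dim(U\cap U^\perp)$. Since ${\bb F}_q$ is perfect and $(V,U)$ is non-degenerate, Lemma \ref{Tvirtual2} forces $\dim(U\cap U^\perp)\le 1$, and it cannot be $0$: otherwise $B|_U$ would be non-degenerate, making $(U,Q|_U)$ a non-degenerate quadratic space of odd dimension $2k+1$ in characteristic $2$, which Lemma \ref{Thyper2} forbids. Hence $N:=U\cap U^\perp$ is one-dimensional, $\dim V=2k+2$, and since $B$ is non-degenerate on $V$ we have $\dim U^\perp=1$, so $U^\perp=N$. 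Writing $N=\langle n\rangle$, Proposition \ref{Tradical} together with non-degeneracy gives $Q(n)\ne 0$.

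Next I would build the restriction homomorphism. Any $\varphi\in\Iso(V,U)$ fixes $U^\perp=N$ pointwise and preserves $B$, hence preserves $n^\perp=N^\perp=U$; so $\rho\colon\Iso(V,U)\to\Iso(U)$, $\varphi\mapsto\varphi|_U$, is a well-defined group homomorphism, and it is surjective by Proposition \ref{Tisosurj} (applicable since $\Rad Q|_U=\{0\}$). The crux is to compute $\ker\rho=\{\varphi\in\Iso(V):\varphi|_U=\mathrm{id}\}$. Choose $v\in V\setminus U$, so that $V=U\oplus\langle v\rangle$; for $\varphi\in\ker\rho$ we get $B(\varphi(v)-v,u)=B(\varphi(v),\varphi(u))-B(v,u)=0$ for all $u\in U$, so $\varphi(v)-v\in U^\perp=\langle n\rangle$, say $\varphi(v)=v+\lambda n$. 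Using $n\in U$ and $B(n,u)=0$ for $u\in U$, the requirement $Q(\varphi(x))=Q(x)$ for all $x$ reduces to $\lambda\bigl(\lambda Q(n)+B(v,n)\bigr)=0$. Since $B$ is non-degenerate on $V$ and $B(n,\cdot)$ vanishes on $U$, we have $B(n,v)\ne 0$; hence there are exactly two admissible values, $\lambda=0$ and $\lambda=B(v,n)/Q(n)$, and a direct check confirms that the second one indeed extends to an isometry of $(V,Q)$ fixing $U$. Therefore $|\ker\rho|=2$.

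Consequently $|\Iso(V,U)|=|\ker\rho|\cdot|\Iso(U)|=2\,|\Iso(U)|$, where $(U,Q|_U)$ is a $(2k+1)$-dimensional quadratic space over ${\bb F}_q$ with trivial radical, so $\Iso(U)$ is the group ${\rm O}(2k+1,q)$; as $2\mid q$, its order is $q^{k^2}\prod_{i=1}^{k}(q^{2i}-1)$, which gives $|\Iso(V,U)|=2q^{k^2}\prod_{i=1}^{k}(q^{2i}-1)$. The main obstacle is the kernel computation, specifically verifying that the non-identity candidate $v\mapsto v+\lambda n$ really is an isometry of $(V,Q)$ — this is exactly where the defining relation for $\lambda$ and the fact $n\in U$ are used; the rest is bookkeeping. (An alternative route identifies $\Iso(V,U)$ with the stabilizer of the anisotropic vector $n$ in $\Iso(V)={\rm O}^{\varepsilon}(2k+2,q)$ and applies the orbit--stabilizer theorem, using that a $(2k+2)$-dimensional space of type $\varepsilon$ has $q^{2k+1}-\varepsilon q^{k}$ vectors of any fixed non-zero norm; the factor $q^{k+1}-\varepsilon$ then cancels, again yielding the $\varepsilon$-independent count.)
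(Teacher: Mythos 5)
Your proposal is correct and follows essentially the same route as the paper: pass to a minimal space, use the surjection $\Iso(V,U)\to\Iso(U)$ from Proposition \ref{Tisosurj}, show its kernel has exactly two elements, and multiply by the known order of ${\rm O}(2k+1,q)$ in characteristic $2$. The only cosmetic difference is in the kernel step, where you parametrize $\varphi(v)=v+\lambda n$ on $V=U\oplus\langle v\rangle$ and solve $\lambda\bigl(\lambda Q(n)+B(v,n)\bigr)=0$, while the paper normalizes in a hyperbolic basis $\{e_1,e_2\}$ of $\Sigma$ with $Q(e_1)=1$ and solves $\alpha(\alpha+1)=0$ --- the same computation in different coordinates.
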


\begin{proof}
It is known from \ref{Tmain} that the restriction map $\Iso(V,U)\to\Iso(U)$ is a surjection, and that $\Iso(U)\cong{\rm O}(2k+1,q)$, hence we only need to show that the kernel of the restriction map is of order $2$.

Assume that $(V,U)$ is minimal, and let us take an isometry $\varphi\in\Iso(V,U)$ that is in the kernel, hence it fixes $U$. We may decompose $V$ as the orthogonal sum $M\oplus\Sigma$ where $\Sigma$ is a hyperbolic space, and $U$ as $M\oplus N$ where $N=U\cap U^\perp$. Then $\varphi$ fixes $M\subset U$, and thus preserves the subspace $\Sigma$.

By \ref{Tvirtual2}, $\dim N=1$, and $\Sigma$ has a basis $\{e_1,e_2\}$ with $\langle e_1\rangle=N$, where the bilinear form takes the form $B(u,v)=u_1v_2+u_2v_1$. Since $(V,U)$ is non-degenerate, $\Rad Q|_U=\{0\}$, and $Q(e_1)\ne 0$, in fact we may assume $Q(e_1)=1$ by rescaling, as the field is perfect. Since $\varphi$ fixes $U^\perp$, which contains $e_1$, we only need to check the image of $e_2$. Let $\varphi(e_2)=\alpha e_1+\beta e_2$ for some parameters $\alpha$, $\beta\in{\bb F}_q$.

First of all, $1=B(e_1,e_2)=B(e_1,\varphi(e_2))=\beta$. Then $Q(e_2)=Q(\varphi(e_2))=\alpha^2+\alpha\beta+\beta^2Q(e_2)$, which gives us $\alpha(\alpha+1)=0$, hence $\alpha\in\{0,1\}$. Since either choice gives us an isometry, we have the kernel containing $2$ elements.
\end{proof}

\begin{corollary}\label{Tortgroup}
Consider a non-degenerate virtual quadratic space $(V,Q,U)$ the finite field ${\bb F}_q$, and let us denote by $\Ort^\varepsilon(2k,q)$ the group $\Iso(V,U)$ when $\dim U=2k$ and $Q$ is of type $\varepsilon$, and by $\Ort(2k+1,q)$ the group $\Iso(V,U)$.
Then
$$\Ort^\varepsilon(2k,q)=2q^{k^2-k}(q^{k}-\varepsilon)\prod_{i=1}^{k-1}(q^{2i}-1)$$
$$\Ort(2k+1,q)=2q^{k^2}\prod_{i=1}^{k}(q^{2i}-1)$$
\end{corollary}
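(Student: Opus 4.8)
The final statement is Corollary \ref{Tortgroup}, which collects the order formulas for $\Ort^\varepsilon(2k,q)$ and $\Ort(2k+1,q)$. Here is how I would prove it.

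\medskip

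The plan is to reduce both formulas to results already established in the excerpt. First I would observe that every non-degenerate virtual quadratic space $(V,Q,U)$ has a minimal subspace by \ref{Tminimal}, and by \ref{Tisounique} the isometry group $\Iso(V,U)$ depends only on the isomorphism type of $(U,Q|_U)$; so it suffices to compute the order for one convenient model in each case. For the even case, $\dim U = 2k$: I would use Lemma \ref{Tvirtual2} (finite fields are perfect) together with the dimension formula $\dim V = \dim U + \dim(U\cap U^\perp)$ from \ref{Tminimal} and the parity discussion to conclude that in a minimal non-degenerate virtual quadratic space of even dimension one necessarily has $V = U$, hence $\Iso(V,U) = \Iso(U) = {\rm O}^\varepsilon(2k,q)$. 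The stated formula is then exactly the classical one quoted in the theorem on the order of orthogonal groups, so nothing further is needed. (In odd characteristic this is immediate since $U\cap U^\perp = \{0\}$; in characteristic $2$ it follows because $\dim V$ must be even by \ref{Tquadform2} and $\dim V - \dim U \le 1$ by \ref{Tvirtual2}.)

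\medskip

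For the odd case, $\dim U = 2k+1$, I would split on the characteristic of ${\bb F}_q$. If $q$ is odd, then again $V = U$ by \ref{Tradical} (as $U\cap U^\perp = \{0\}$), so $\Iso(V,U) = {\rm O}(2k+1,q) = 2q^{k^2}\prod_{i=1}^k(q^{2i}-1)$, which is the asserted value. If $q$ is even, then $\Iso(V,U)$ is precisely the group computed in the immediately preceding theorem (the one asserting $|\Iso(V,U)| = 2q^{k^2}\prod_{i=1}^k(q^{2i}-1)$ for a characteristic-$2$ non-degenerate virtual quadratic space of odd dimension), and I would simply cite that. The point of the corollary is that both characteristics yield the same polynomial $2q^{k^2}\prod_{i=1}^k(q^{2i}-1)$, in contrast to the classical ${\rm O}(2k+1,q)$ whose order drops by a factor of $2$ when $q$ is even.

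\medskip

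I expect the main obstacle to be purely organizational rather than mathematical: the corollary is a bookkeeping statement assembling several earlier results, so the real work is to make explicit, in each of the four sub-cases $(\text{even}/\text{odd dimension}) \times (\text{even}/\text{odd characteristic})$, exactly which prior result supplies the count — in particular being careful that in the even-dimension, characteristic-$2$ case the minimal $V$ really does coincide with $U$ (so that no correction term appears), and that in the odd-dimension case the factor of $2$ is uniform. No new computation beyond invoking \ref{Tmain}, \ref{Tvirtual2}, \ref{Tquadform2}, and the two order theorems should be required.
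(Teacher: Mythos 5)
Your proposal is correct and matches the paper's (implicit) argument: the corollary is stated without proof precisely because it follows by combining the classical order formulas with the preceding characteristic-$2$ theorem, using \ref{Tvirtual2} and minimality to see that $V=U$ whenever $\dim U$ is even or the characteristic is odd. Your case analysis and citations are exactly the intended bookkeeping.
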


\section*{Acknowledgement}

I am grateful for the help of J\'ozsef Pelik\'an for providing me with references.

\end{document}